\newcommand {\R}{\mathbb R}
\newcommand{\e}{\mathbb E}
\theoremstyle{definition}\newtheorem{thm}{Theorem}
\theoremstyle{definition}\newtheorem{lem}[thm]{Lemma}
\theoremstyle{definition}\newtheorem{cor}[thm]{Corollary}
\theoremstyle{definition}
\theoremstyle{definition}\newtheorem{defi}[thm]{Definition}
\theoremstyle{definition}\newtheorem{rem}[thm]{Remark}
\theoremstyle{definition}\newtheorem{prop}[thm]{Proposition}
\theoremstyle{definition}
\theoremstyle{definition}
\numberwithin{equation}{section}
\newcommand{\PP}{\mathbb P}
\newcommand{\Z}{\mathbb Z}
\newcommand{\ind}{\mathbf 1}
\newcommand{\bbN}{\Bbb{N}}
\newcommand{\E}{\e}
\newcommand{\calG}{{\cal G}}
\newcommand{\iid}{\hbox{i.i.d.}}
\begin{document}

\author{Fran\c{c}ois Baccelli\thanks{baccelli@math.utexas.edu} \\{\small The University of Texas at Austin} 
\and Antonio Sodre\thanks{asodre@math.utexas.edu}\\{\small The University of Texas at Austin}}
\title{Renewal Population Dynamics and their Eternal Family Trees}
\date{}
\maketitle
\begin{abstract}
Based on a simple object, an $\hbox{i.i.d.}$ sequence of positive integer-valued random variables, $\{a_n\}_{n\in \Z}$, we introduce and study two random structures and their connections. First, a population dynamics, in which each individual is born at time $n$ and dies at time $n+a_n$. This dynamics is that of a D/GI/$\infty$ queue, with arrivals at integer times and service times given by $\{a_n\}_{n\in \Z}$. Second, the directed random graph $T^f$ on $\Z$ generated by the random map $f(n)=n+a_n$.  Only assuming $\E[a_0]<\infty$ and $\PP[a_0=1]>0$, we show that, in steady state, the population dynamics is regenerative, with one individual alive at each regenerative epochs. We identify a unimodular structure in this dynamics.
More precisely, $T^f$ is a unimodular directed tree, in which $f(n)$ is the parent of $n$. This tree has a unique bi-infinite path.  Moreover, $T^f$  splits the integers into two categories: ephemeral integers, with a finite number of descendants of all degrees, and successful integers, with an infinite number. Each regenerative epoch is a successful individual such that all integers less than it are its descendants of some order. Ephemeral, successful, and regenerative integers form stationary and mixing point processes on $\Z$.  

 \end{abstract}
{\bf Key words:} population dynamics, queuing system, unimodular random graph, eternal family tree, branching process. 

\noindent{\bf MSC 2010 subject classification:} Primary: 92D25, 60K25, 05C80.

\section*{Introduction}

To each integer $n$ we assign a positive random integer
$a_n$. Then, $n$ is mapped $a_n$ units to the right. Given a probability space $(\Omega,\mathcal{F},\PP)$ supporting the random sequence $\{a_n\}_{n\in \Z}$, we consider the function $f: \Omega\times \Z\to \Z$ defined by $f(\omega,n)=n+a_n(\omega)$. We assume the sequence $\{a_n\}_{n\in \Z}$ is $\hbox{i.i.d.}$ We study two objects arising from the random map $f$: a population dynamics
and a directed random graph. When there is no chance of ambiguity, we omit $\omega$ in our notation. 

In the population dynamics, 
individual $n$ is {\em born} at time $n$ and {\em dies} at time $f(n)$.
More precisely, the lifespan of individual $n$ is assumed to be $[n,n+a_n)$.
Let $\textbf{N}:=\{N_n\}_{n\in \Z}$ be the discrete time random process of the number of individuals
alive at time $n$, or simply the {\em population process}.
The process $\textbf{N}$ can be seen as the number of customers at arrival epochs in a D/GI/$\infty$
queue, namely, a queuing system with one arrival at every integer time, and $\hbox{i.i.d.}$ integer-valued service times, all distributed like $a_0$. As a new arrival takes place at each integer, the population
never goes extinct or, equivalently, the queue is never empty. Nonetheless, as we shall see, when $\E[a_0]<\infty$, the stationary population process is regenerative. To visualize the number of individuals
alive at time 0, count the edges that cross over 0, {\em and} count the
individual born at time 0 (Figure 1).

\begin{figure}
\center
      \includegraphics[width=1\textwidth]{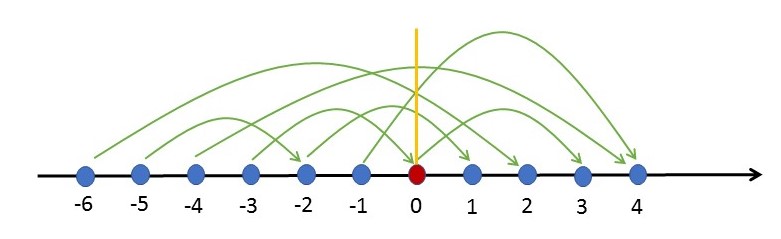}
  \caption{\textbf{The population process.} Here, $a_{-6}=10,~a_{-5}=3,a_{-4}=8,~a_{-3}=3,~a_{-2}=3,~\hbox{and}~a_{-1}=6$. Assuming $a_{-i}<i$ for all $i>6$, there are four edges crossing $0$. Individuals  -6,-4,-2, and -1 are still alive at time 0. Hence, $N_0=5$, as it includes the individual born at 0.}
\end{figure}

By letting $V^f=\Z$ and $E^f=\{(n,f(n)):n\in \Z\}$,
the random map $f$ also induces a random directed graph $T^f$. Further assuming $\PP[a_0=1]>0$, we will show $T^f$ is a directed tree.

We interpret $T^f$ as a family tree and connect it to the population process $\textbf{N}$.
In order to gain insight, we draw a parallel with the classical age-dependent branching process. Such a process is
built upon a lifetime distribution $L$ on $\mathbb R^+$ and an offspring distribution $O$ on $\mathbb N$.
In this classical model, the first individual is born at time 0 and has a lifetime $l$ sampled from $L$.
When the first individual dies at $l$, it is replaced by its offspring, whose cardinality is sampled from $O$.
From then on, each individual statistically behaves as the first one, with all individuals having independent
lifetimes and offspring cardinalities (see, e.g., \cite{waugh1955age}). 

Our family tree, in which individuals are indexed by $\Z$, is obtained by declaring that the individual
$f(n)$ is the parent of $n$, $f^2(n)$ is its grandparent, and so on. In terms of interpretation, this requires that we look at the  
population dynamics in reverse time. In ``reverse time'', individual $n$ ``dies'' at time $n$
(note that it is the only one to die at that time) and was ``born earlier'', namely, at time $n+a_n$. 
Since individual $m$ is born at time $n$ if 
$f(n)=m$, the set of children of $n$ is $f^{-1}(n)$, the
set of its grandchildren is $f^{-2}(n)$, and so on.  As in the age-dependent branching process, each individual has exactly one parent, but may have no children,
and the death time of an individual coincides with the birth time of its children.
Also notice $f^{-1}(n)\subset (-\infty,n-1]$. That is, as in the natural enumerations
used in branching processes, the children of individual $n$ have ``larger'' indices than that of individual $n$
(recall that individuals are enumerated using their ``death times''). Hence, each individual is born at the death of its parent and dies ``after'' its parent. Figure 2 illustrates the relation between the population process and $T^f$.
\begin{figure}
\center
      \includegraphics[width=1\textwidth]{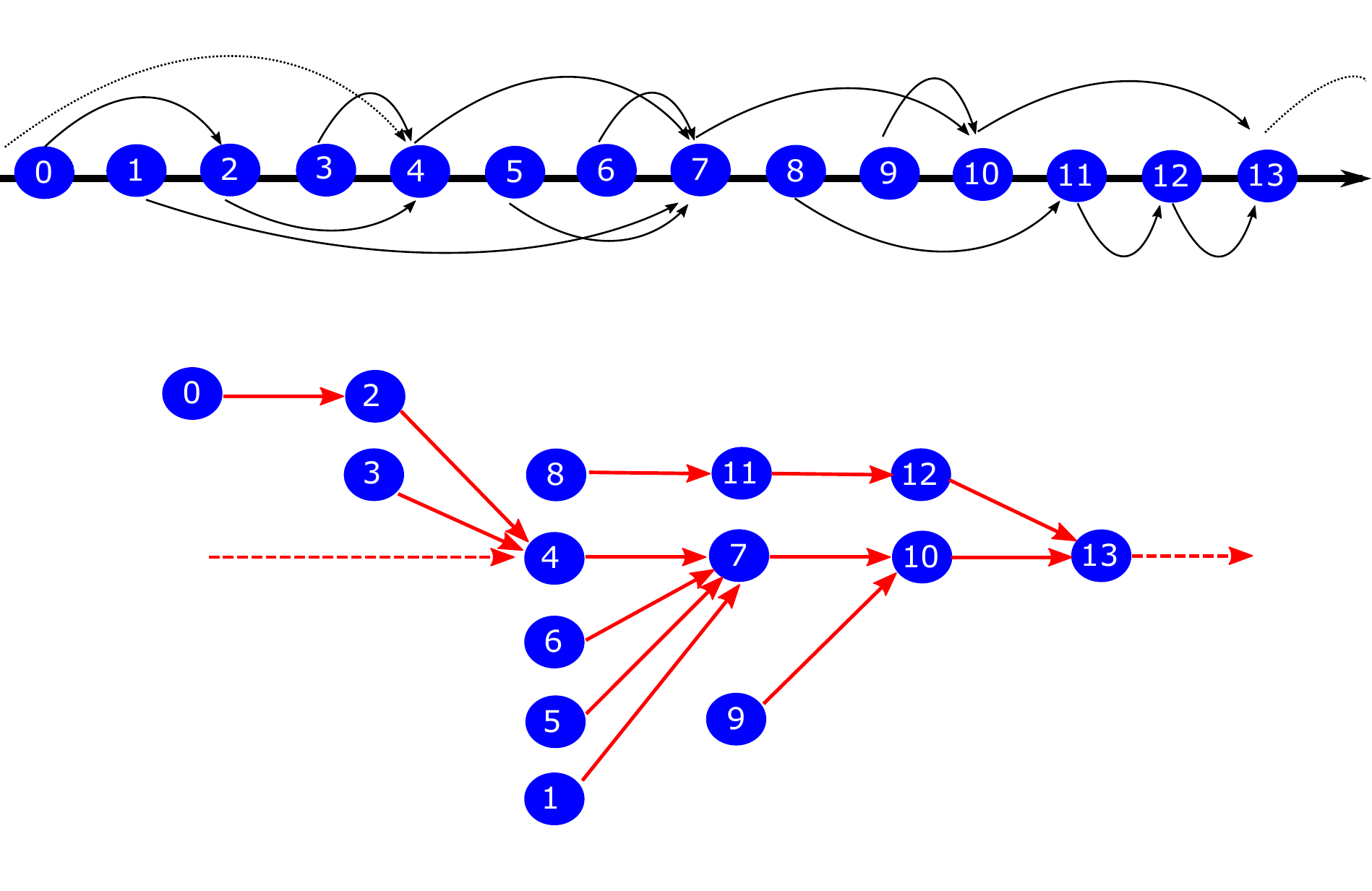}
  \caption{\textbf{From the population process to the family tree.} The top figure depicts the $f$ dynamics running on $\Z$. The curved black lines represent individual lifespans. For example, individual $7$ lives three units of time. The bottom figure depicts the tree representation of the dynamics on $\Z$, with the red edges representing parenthood. There, $7$ is the child of 10, and has four children: 1,4, 5, and 6. Individual $8$, for example, has no children.}
\end{figure}

However, our age-dependent family tree is far from being that of the age-dependent branching process discussed above.
In particular, there is no first individual: our family tree is {\em eternal} \cite{BOAunimodular}. More importantly, it lacks the independence
properties of branching processes. In particular, the offspring cardinalities of different individuals are dependent.  

In the age-dependent branching process described above, if we set $L=1$,
we recover the Bienaym\'e-Galton-Watson process. Despite the fact that the building
block of both our model and the Bienaym\'e-Galton-Watson model is just a sequence of
$\hbox{i.i.d.}$ random variables, the two models are quite different.
In the former, the $\hbox{i.i.d.}$ random variables define the offspring cardinalities. In the latter, 
they define the lifespans of individuals. Moreover, as we shall see, our model is
always {\em critical}: the mean offspring cardinality is one for all individuals. 

The fact that, when $\PP[a_0=1]>0$, $T^f$ is a unimodular directed tree \cite{BOAunimodular} allow us
 to complement the classical queuing and regenerative analysis
by structural observations that we believe to be new:
\begin{enumerate}
\item this tree is two ended;
\item an integer (individual) is either successful or ephemeral depending on whether the number of
its descendants (pre-images by $f$) of all orders is infinite or finite a.s.;
\item the set of successful (resp. ephemeral) integers forms a stationary point process;
\item there is a stationary thinning of the successful point process consisting of individual
such that all individuals born after one of them are its descendants - these are
called original ancestors;
\item each individual has a finite number of cousins of all orders.
\end{enumerate}
These structural observations are completed by closed form expressions for the
intensities of the point processes in question.

The last interpretation of the family tree pertains to renewal theory.
One can see $n,f(n),f^2(n),\cdots$ as a renewal process on the integers with interarrivals distributed like $a_0$ and starting from time $n$. The graph $T^f=(V^f,E^f)$ can hence be seen as the {\em mesh} of all
such renewal processes.
By mesh, we mean that the renewal process of $n$ merges with that of $m$ at the first time when
the orbits $\{f^p(n)\}_{n\in \bbN}$ and $\{f^q(m)\}_{m\in \bbN}$ meet.

\section{Population and queue dynamics} 
\label{populationdynamics}

In this section, we study the population process 
\begin{align}
\label{individualsalive}
N_n=\#\{\hbox{all $m\in \Z$~such that } m<n~\hbox{and}~f(m)>n\}+1,\quad n\in \mathbb Z.
\end{align}
Section \ref{generationgeneralcase} introduces definitions and notation. In Section \ref{generalcasemainresults}, we show the population process is regenerative with independent cycles. In Section \ref{analyticsofN}, an explicit formula for the moment generating function of $N_0$ is given and it is indicated this random variable is always light-tailed, regardless of the distribution of $a_0$. Finally, in Section \ref{geometricmarks}, we work out the case in which $a_0$ follows a geometric distribution and, consequently, the population process is Markovian. The notions and proof techniques in this section are classical and details are kept to a minimum.

\subsection{Definitions and assumptions}
\label{generationgeneralcase} 

Let $(\Omega,\mathcal{F}, \PP,\{\theta_n\}_{n\in\Z})$ be the underlying probability space supporting all random elements discussed in this paper, endowed with discrete flow. We assume $\PP$ is preserved by $\theta_n$, i.e., for all $n$, 
\begin{align}
\label{stationaryPP}
\PP\circ \theta_n^{-1}=\PP. 
\end{align}
A random integer-valued discrete sequence $\textbf{W}=\{W_n\}_{n\in \Z}$ defined on $\Omega$ is compatible with the flow $\{\theta_n\}_{n\in \Z}$ if $W_n(\omega)=W_0(\theta_n \omega)$ for all $n\in \Z$. Notice that, given (\ref{stationaryPP}), if a process $\textbf{W}$ is compatible with $\{\theta_n\}_{n\in \Z}$, then it is strictly stationary. All integer-valued discrete sequences considered here are $\{\theta_n\}_{n\in \Z}-$compatible (or, for short, stationary).  

In particular, since $\textbf{a}:=\{a_n\}_{n\in \Z}$ is stationary, so is $\textbf{N}:=\{N_n\}_{n\in \Z}$, assuming the population process starts at $-\infty$.

Consider a stationary integer-valued discrete sequence $\{U_n\}_{n\in \Z}$
in which $U_n$ equals $0$ or $1$. Let $\{k_n\}_{n\in \Z}$, with
\begin{align}
\label{ordering}
\cdots<k_{-1}<k_0\leq 0 < k_1 <k_2 <\cdots,
\end{align}
be the sequence of times at which $U_n=1$. A simple stationary point process (henceforth ${s.s.p.p.}$)
on $\Z$ is then a random counting measure $\Phi(\cdot)=\sum_{n\in \Z}\delta_{k_n}(\cdot)$, 
where $\delta_{k_n}(\cdot)$ is the Dirac measure at $k_n$.
Throughout the document, all ${s.s.p.p.}$s will be assumed to such that the associated $\Phi$ is a.s. not equal to the
empty measure.  
We often identify $\Phi$ with the sequence $\{k_n\}_{n\in \Z}$, writing $k_n\in \Phi$, whenever $\Phi(\{k_n\})=1$. 

The intensity of a ${s.s.p.p.}$, denoted by $\lambda_{\Phi}$, is given by $\PP[0\in \Phi]$.  

Let $\Phi$ be a ${s.s.p.p.}$ on $\Z$ such that $\lambda_{\Phi}>0$ and let $\PP_{\Phi}[\cdot]:=\PP[\cdot|0\in \Phi]$. Then $\PP_{\Phi}$ is the {\em Palm probability} of $\Phi$. We denote by $\E_{\Phi}$ the expectation operator of $\PP_{\Phi}$. Consider the operator $\theta_{k_1}$, i.e.,
 \begin{align}
 \label{invariantshift}
 \Phi\circ \theta_{k_1}:=\{k_{n+1}\}_{n\in \Z}.
 \end{align} 
 Since $\theta_{k_1}$ is a bijective map on $\Omega_0=\{k_0(\omega)=0\}$, the following holds \cite{heveling2005characterization}. 
\begin{lem}
\label{palmpreservation}
The operator $\theta_{k_1}$ preserves the Palm probability $\PP_{\Phi}$. 
\end{lem}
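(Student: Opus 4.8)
The plan is to reduce the statement to an identity for the stationary probability $\PP$ and then exploit flow-compatibility of $\Phi$ to match a ``shift to the next point'' with a ``shift to the previous point''. By the definition of the Palm probability and since $\lambda_\Phi = \PP[0\in\Phi] > 0$, proving $\theta_{k_1}$-invariance of $\PP_\Phi$ amounts to showing, for every bounded measurable $h:\Omega\to\R$,
\begin{align*}
\E\big[\ind\{0\in\Phi\}\, h(\theta_{k_1}\omega)\big] = \E\big[\ind\{0\in\Phi\}\, h(\omega)\big].
\end{align*}
First I would note that on $\Omega_0 = \{k_0 = 0\}$ the point $k_1$ is the first atom of $\Phi$ strictly to the right of the origin, so $k_1 \ge 1$ a.s., and I would decompose the left-hand side according to its value:
\begin{align*}
\E\big[\ind\{0\in\Phi\}\, h(\theta_{k_1}\omega)\big] = \sum_{m\ge 1}\E\big[\ind_{B_m}\, h(\theta_m\omega)\big],\qquad B_m := \{0\in\Phi,\ k_1 = m\}.
\end{align*}

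Next, for each fixed $m\ge 1$ I would apply the $\theta$-invariance of $\PP$ in the form $\E[G(\omega)h(\theta_m\omega)] = \E[G(\theta_{-m}\omega)h(\omega)]$, a change of variables valid because $\{\theta_n\}$ is a flow preserving $\PP$ (one checks this by applying $\PP\circ\theta_m^{-1}=\PP$ to the function $\omega\mapsto G(\theta_{-m}\omega)h(\omega)$). Taking $G = \ind_{B_m}$, the crux is to identify the event $\{\omega : \theta_{-m}\omega \in B_m\}$. Using compatibility one has $j\in\Phi(\theta_{-m}\omega)\iff (j-m)\in\Phi(\omega)$, so the three defining constraints of $B_m$ — an atom at $0$, an atom at $m$, and no atom in $(0,m)$ — translate into an atom at $-m$, an atom at $0$, and no atom in $(-m,0)$; that is, $\theta_{-m}\omega\in B_m$ precisely when $0\in\Phi(\omega)$ and $k_{-1}(\omega) = -m$. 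Hence each summand equals $\E[\ind\{0\in\Phi,\ k_{-1} = -m\}\, h]$.

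Finally, summing over $m\ge 1$ and observing that on $\{0\in\Phi\}$ the previous atom satisfies $k_{-1} \le -1$, so the events $\{0\in\Phi,\ k_{-1} = -m\}$ over $m\ge 1$ partition $\{0\in\Phi\}$ (here using that, with positive intensity, $\Phi$ a.s. has infinitely many atoms to the left of the origin), yields the desired identity; dividing by $\lambda_\Phi$ gives $\E_\Phi[h\circ\theta_{k_1}] = \E_\Phi[h]$. The step I expect to be the main obstacle — really the conceptual heart — is the index bookkeeping in the middle paragraph: verifying that under $\theta_{-m}$ the ``next atom to the right'' in $B_m$ becomes the ``previous atom to the left'', which is exactly the bijection between $\theta_{k_1}$ and its inverse $\theta_{k_{-1}}$ underlying the bijective point-shift characterization of \cite{heveling2005characterization}. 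Everything else is routine change-of-variables and a partition argument.
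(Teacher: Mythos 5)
Your proof is correct, but it takes a genuinely different route from the paper. The paper does not give a computational proof at all: it simply observes that $\theta_{k_1}$ is a bijective map on $\Omega_0=\{k_0(\omega)=0\}$ and invokes the characterization of Palm measures via bijective point-shifts from \cite{heveling2005characterization}, so its entire argument is a one-line citation. You instead prove the invariance from scratch: reduce to the identity $\E[\ind\{0\in\Phi\}\,h\circ\theta_{k_1}]=\E[\ind\{0\in\Phi\}\,h]$, partition over the gap value $\{k_1=m\}$, apply the $\PP$-preserving change of variables $\E[G\,(h\circ\theta_m)]=\E[(G\circ\theta_{-m})\,h]$ for each fixed $m$, and check via flow-compatibility that $\theta_{-m}^{-1}$ carries $\{0\in\Phi,\,k_1=m\}$ to $\{0\in\Phi,\,k_{-1}=-m\}$, which re-partitions $\{0\in\Phi\}$. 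This index bookkeeping is exactly the content of the bijectivity the paper appeals to (next-point shift and previous-point shift are inverse bijections on $\Omega_0$), so your argument can be read as an elementary proof, specialized to $\Z$, of the cited result. What your approach buys is self-containedness and transparency — everything follows from $\PP\circ\theta_n^{-1}=\PP$ and compatibility, with no external theorem; what the paper's approach buys is brevity and generality. The only points you should make explicit (and which you do gesture at correctly) are that $k_1<\infty$ and $k_{-1}>-\infty$ a.s.\ on $\{0\in\Phi\}$, i.e., a stationary point process on $\Z$ that is a.s.\ nonempty has infinitely many atoms on both sides of the origin; this is standard (no probability measure on $\Z$ is shift-invariant) and is implicit in the paper's indexing of the atoms by all of $\Z$.
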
 
 
A ${s.s.p.p.}$ $\Psi$ on $\Z$ is an {\em integer-valued renewal process} if $\{k_n-k_{n-1}\}_{n\in \Z}$ is $\hbox{i.i.d.}$ under $\PP_{\Psi}$. 

Finally, we say that a stationary process $\textbf{R}:=\{R_n\}_{n\in \Z}$ is {\em regenerative} if there exists an integer-valued renewal process $\Psi=\{k_n\}_{n\in \Z}$, such that, under $\PP_{\Psi}$, $(\{k_n-k_{n-1}\}_{n > j},\{R_n\}_{n\geq  k_j})$ is independent of $\{k_n-k_{n-1}\}_{n\leq j}$ for all $j\in \Z$ and its distribution does not depend on $j$. Moreover, $\textbf{R}$ is {\em regenerative with independent cycles} if $\textbf{R}$ is regenerative and $\{R_{n}\}_{n< 0}$ is independent of $\{R_n\}_{n\geq 0}$ under $\PP_{\Psi}$. We call $\{k_n\}_{n\in \Z}$ the regeneration points of $\textbf{R}$. 

For most of this work, unless otherwise stated, we assume: $\textbf{a}:=\{a_n\}_{n\in \Z}$ is $\hbox{i.i.d.}$, $\E[a_0]<\infty$, and $\PP[a_0=1]>0$.

\begin{rem}
\label{aismixing}
Since randomness in this model comes from the sequence $\{a_n\}_{n\in \Z}$ and $\theta_1$ preserves $\PP$, by assuming $\{a_n\}_{n\in \Z}$ is $\hbox{i.i.d.}$, there is no loss of generality in assuming $(\Omega,\mathcal{F},\PP,\{\theta_n\}_{n\in \Z})$ is strongly mixing, i.e., 
$$\lim_{n\to \infty} \PP[A\cap \theta_{\pm n} B]=\PP[A]\PP[B]~\forall~A,B\in \mathcal{F}.$$
\end{rem}

\subsection{General case: main results} 
\label{generalcasemainresults}
\begin{thm}
\label{originalancestorsthm}
Let $\Psi^o:=\{m\in \Z: N_m=1\}$. Then, $\Psi^o$ is an integer-valued renewal process with intensity $\lambda^o:=\lambda_{\Psi^o}=\prod_{i=1}^{\infty} \PP[a_0\le i]>0$.  
\end{thm}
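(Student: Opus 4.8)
The plan is to describe $\Psi^o$ explicitly in terms of the driving sequence $\{a_n\}$, read off the intensity by independence, establish its positivity from the moment and positivity hypotheses, and finally exploit the fact that a regeneration epoch makes all ``old'' lifespan constraints slack to obtain i.i.d.\ increments.

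First I would unwind definition \eqref{individualsalive}. By construction $N_m=1$ exactly when no individual born strictly before $m$ is still alive at $m$, i.e.\ $f(k)=k+a_k\le m$ for every $k<m$. Writing $k=m-j$ this reads
\[
m\in\Psi^o \iff a_{m-j}\le j \quad\text{for all } j\ge 1,
\]
so $\Psi^o$ is precisely the set of epochs at which the D/GI/$\infty$ queue holds only the fresh arrival. Since $\{a_n\}$ is $\iid$, the events $\{a_{m-j}\le j\}$, $j\ge 1$, are independent, whence
\[
\lambda^o=\PP[0\in\Psi^o]=\prod_{j=1}^{\infty}\PP[a_0\le j].
\]

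For positivity, note that $\PP[a_0=1]>0$ gives $\PP[a_0\le j]\ge\PP[a_0=1]>0$ for every $j\ge 1$, so no factor vanishes, and since $a_0$ is positive integer-valued, $\sum_{j\ge 1}\PP[a_0>j]=\E[a_0]-1<\infty$ by hypothesis; hence the product of the terms $\PP[a_0\le j]=1-\PP[a_0>j]$ converges to a strictly positive limit, giving $\lambda^o>0$. The map $n\mapsto\ind\{N_n=1\}$ is $\{\theta_n\}$-compatible, so $\Psi^o$ is an \emph{s.s.p.p.}; because the system is mixing (Remark \ref{aismixing}), hence ergodic, Birkhoff's theorem makes the empirical intensity converge to $\lambda^o>0$, so $\Psi^o$ has a.s.\ infinitely many points in both directions and all inter-point gaps are a.s.\ finite.

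The renewal property is the heart of the argument. The key observation is that, conditionally on $0\in\Psi^o$, the constraints $a_k\le -k$ for $k<0$ already force $a_k<m'-k$ for every such $k$ and every $m'>0$, so the only \emph{active} constraints for a later point are $m'\in\Psi^o\iff a_k\le m'-k$ for all $0\le k<m'$. Consequently the first regeneration epoch after $0$ is a function of the future arrivals only,
\[
k_1=g(a_0,a_1,\dots),\qquad g(b_0,b_1,\dots):=\min\{m\ge 1: b_k\le m-k\ \text{for all } 0\le k<m\},
\]
and the same computation started from an arbitrary $k_j\in\Psi^o$ yields $k_{j+1}-k_j=g(a_{k_j},a_{k_j+1},\dots)$. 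Under $\PP_{\Psi^o}$ the future block $\{a_k:k\ge 0\}$ is $\iid$ with its original law, since $\{0\in\Psi^o\}$ is measurable with respect to $\{a_k:k<0\}$, which is independent of it; moreover each $\{k_j=m\}$ is $\sigma(a_0,\dots,a_{m-1})$-measurable, so $k_j$ is a stopping time for $\{a_k\}_{k\ge 0}$. The strong Markov (renewal) property of $\iid$ sequences then makes $\{a_{k_j+i}\}_{i\ge0}$ an independent fresh copy, so $k_{j+1}-k_j=g(\{a_{k_j+i}\}_{i\ge0})$ is independent of $(k_1,\dots,k_j)$ with the law of $k_1$, independently of $j$. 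This gives i.i.d.\ forward increments, and since $\theta_{k_1}$ preserves $\PP_{\Psi^o}$ (Lemma \ref{palmpreservation}) the two-sided gap sequence is stationary, so i.i.d.\ forward increments force the whole sequence to be i.i.d.; thus $\Psi^o$ is an integer-valued renewal process.

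The main obstacle is the third step: one must isolate cleanly the decoupling of the forward regeneration epochs from the past, which rests entirely on the remark that a regeneration epoch renders all prior crossing constraints slack, together with the fact that Palm conditioning only touches the past and hence leaves the $\iid$ future intact. Once that is in place, the independence of increments is a routine application of the independence of disjoint blocks of an $\iid$ sequence via the strong Markov property.
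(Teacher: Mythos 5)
Your proposal is correct, and its first half coincides with the paper's: the paper likewise unwinds $\{N_0=1\}=\cap_{j\ge 1}\{a_{-j}\le j\}$, gets $\lambda^o=\prod_{j\ge1}\PP[a_0\le j]$ from independence, proves positivity from $\PP[a_0=1]>0$ and $\E[a_0]<\infty$ (though it verifies convergence of the product by hand with the estimate $-x\le \ln(1-x/2)$ on $[0,1]$, where you invoke the standard criterion $\sum_j \PP[a_0>j]<\infty$), and uses mixing plus Birkhoff's theorem to get a.s.\ infinitely many points. Where you genuinely diverge is the renewal property, which the paper delegates to Proposition \ref{propositionregardingrenewal} in the appendix, supported by Lemma \ref{independencefrompastfuture}. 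Both arguments rest on the same two pillars — the Palm conditioning event is measurable with respect to $\{a_n\}_{n<0}$, so the forward sequence keeps its i.i.d.\ law under $\PP_{\Psi^o}$, and $\theta_{k_1}$ preserves $\PP_{\Psi^o}$ (Lemma \ref{palmpreservation}) — but the executions differ. The paper first gets identically distributed increments from Palm shift-invariance, then proves independence of any finite family of increments by induction: it shifts the family so the last increment becomes $k^o_1$, splits that factor off using past/future independence under the Palm measure, and repeats. You instead make the increments explicit as $k_{j+1}-k_j=g(a_{k_j},a_{k_j+1},\dots)$ via the slack-constraint observation (which the paper uses only implicitly when asserting that $k^o_1$ is a function of $\{a_n\}_{n\ge0}$ and the negative-index increments are functions of $\{a_n\}_{n<0}$), observe that the $k_j$ are stopping times of the forward sequence, and invoke the strong Markov property of i.i.d.\ sequences to obtain all forward increments i.i.d.\ in one stroke, finally extending to the two-sided gap sequence by stationarity. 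Your route buys a more transparent and self-contained description of the increment structure; the paper's route avoids introducing $g$ and the stopping-time machinery and handles the bi-infinite sequence directly, at the cost of an iterated factorization argument. Both are complete proofs.
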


The atoms of $\Psi^o$ are called \emph{original ancestors} as all individuals born after any of them are necessarily its descendants in the family tree $T^f$ studied in Section \ref{netree}. 

\begin{proof}[Proof of Theorem \ref{originalancestorsthm}]
First we prove $\PP[N_0=1]>0$. By (\ref{individualsalive}),
$\PP[N_0=1]=\PP[\cap_{j=1}^{\infty} a_{-j}\le j]$.
Then, as the sequence $\{a_n\}_{n\in \Z}$ is \iid, 
\begin{align*}
\PP[N_0=1]=\prod_{j=1}^{\infty}\PP[a_0\le j].
\end{align*}
Since $\PP[a_0=1]>0$, none of the elements of the above product equals $0$. Hence, we can take logs on both sides to get:
\begin{align*}
\ln(\PP[N_0=1])&=\sum_{j=1}^{\infty}\ln(1-\PP[a_0>j])\geq \sum_{j=1}^{\infty}\ln(1-\PP[a_0\ge j])\\
&\ge 
\sum_{j=1}^{j^*-1} 
\ln(1-\PP[a_0>j])
-2\sum_{j=j^*}^{\infty} \PP[a_0\ge j]=C-2\E[a_0]>-\infty, 	
\end{align*}
where $C= \sum_{j=1}^{j^*-1}  (\ln(1-\PP[a_0>j]) +2 \PP[a_0>j])$. 
Here we used the fact that $\PP[a_0\ge j]> \frac 1 2$ for finitely many $j$ to define $j^*$,
the first $j$ such that $\PP[a_0\ge j]\le \frac 1 2$, and
the fact that $-x\leq \ln\left(1-\frac{x}{2}\right)$ if $x\in [0,1]$.
Consequently, $\PP[N_0=1]>0$. Stationarity of $\textbf{N}$ implies, for all $n\in \Z$, $\PP[N_n=1]=\PP[N_0=1]>0$. 

Since $(\Omega,\mathcal{F},\PP,\{\theta_n\}_{n\in \Z})$ is strongly mixing (Remark \ref{aismixing}), it is ergodic. Therefore, by Birkhoff's pointwise ergodic theorem, for all measurable functions $g:\Omega\to \R^+$ such that $\E[g]<\infty$, 
\begin{align*}
\lim_{n\to \pm\infty}\frac{1}{n}\sum_{i=1}^{\pm n}g\circ \theta_{\pm n}=\E[g]~~\PP-\hbox{a.s.}.
\end{align*} 

Let $g=\textbf{1}\{N_0=1\}$. Then, 
\begin{align*}
\lim_{n\to \pm\infty}\frac{1}{n}\sum_{i=1}^{\pm n}\textbf{1}\{N_0=1\}\circ \theta_{\pm n}=\PP[N_0=1]>0~~\PP-\hbox{a.s.}.
\end{align*} 

Hence, there exists a subsequence of distinct integers $\Psi^o:=\{k^o_n\}_{n\in \Z}$, satisfying (\ref{ordering}) such that $N_{k^o_n}=1$ for all $n\in \Z$. That $\Psi^o$ is a renewal process is proved in Appendix \ref{someproofsa}, Proposition \ref{propositionregardingrenewal}.  
\end{proof} 

In order to show that $\textbf{N}$ is a stationary regenerative process with respect to $\Psi^o$ with independent cycles, we rely on the following lemma. 
\begin{lem}
\label{independencefrompastfuture}
Under $\PP_{\Psi^o}$, $\{a_n\}_{n<0}$ is independent of $\{a_n\}_{n\geq 0}$. 
\end{lem}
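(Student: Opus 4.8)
The plan is to reduce the statement to the single observation that the conditioning event $\{0\in\Psi^o\}=\{N_0=1\}$ is measurable with respect to the strict past of the sequence $\{a_n\}_{n\in\Z}$, and then to invoke the independence of the past and future coordinates of an $\iid$ sequence under $\PP$.

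First I would make the conditioning event explicit. By (\ref{individualsalive}), $N_0=1$ means that no individual born strictly before time $0$ is still alive at time $0$; equivalently, $f(m)=m+a_m\le 0$ for every $m<0$. Writing $m=-i$ with $i\ge 1$, this is exactly the event $\bigcap_{i\ge 1}\{a_{-i}\le i\}$, consistent with the computation of $\PP[N_0=1]$ in the proof of Theorem \ref{originalancestorsthm}. In particular $\{N_0=1\}\in\mathcal F_-:=\sigma(\{a_n\}_{n<0})$, and -- this is the crucial point -- it places no constraint whatsoever on $a_0,a_1,a_2,\dots$.

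Next, since $\{a_n\}_{n\in\Z}$ is $\iid$ under $\PP$, the $\sigma$-algebras $\mathcal F_-=\sigma(\{a_n\}_{n<0})$ and $\mathcal F_+:=\sigma(\{a_n\}_{n\ge 0})$ are independent. Fix $A\in\mathcal F_-$ and $B\in\mathcal F_+$. By definition of the Palm probability,
\[
\PP_{\Psi^o}[A\cap B]=\frac{\PP[A\cap B\cap\{N_0=1\}]}{\PP[N_0=1]}.
\]
Because $A\cap\{N_0=1\}\in\mathcal F_-$ is independent of $B\in\mathcal F_+$, the numerator factors as $\PP[A\cap\{N_0=1\}]\,\PP[B]$, so that $\PP_{\Psi^o}[A\cap B]=\PP_{\Psi^o}[A]\,\PP[B]$. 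Taking $A=\Omega$ and using that $B$ is independent of $\{N_0=1\}$ gives $\PP_{\Psi^o}[B]=\PP[B]$; substituting this back yields $\PP_{\Psi^o}[A\cap B]=\PP_{\Psi^o}[A]\,\PP_{\Psi^o}[B]$, which is precisely the asserted independence under $\PP_{\Psi^o}$.

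I do not anticipate a genuine obstacle here: the entire content is the first step, namely recognizing that $\{N_0=1\}$ lives in the past $\sigma$-algebra $\mathcal F_-$ and in particular leaves $a_0$ unconstrained, so that conditioning on it perturbs neither the law of the future block $\{a_n\}_{n\ge 0}$ nor its independence from the past. The one point that warrants care is the precise placement of the index $0$: the regeneration event involves only $a_{-1},a_{-2},\dots$, whereas the ``future'' bundle is taken to start at $a_0$, and it is exactly this clean cut at $0$ that makes the factorization go through.
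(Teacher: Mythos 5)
Your proof is correct and takes essentially the same route as the paper's: both arguments hinge on identifying the conditioning event $\{0\in\Psi^o\}=\{N_0=1\}=\bigcap_{i\ge 1}\{a_{-i}\le i\}$ as an event in the past $\sigma$-algebra $\sigma(\{a_n\}_{n<0})$, and then factoring via the independence of the past and future coordinates of the $\iid$ sequence under $\PP$. The only cosmetic difference is that you work with events $A\in\mathcal{F}_-$, $B\in\mathcal{F}_+$ while the paper uses bounded test functions of the past and future blocks; your extra observation that $\PP_{\Psi^o}$ coincides with $\PP$ on the future $\sigma$-algebra is implicit in the paper's computation.
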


\begin{proof}
Let $g,f:\Omega\to \R^+$ be two measurable, continuous, and bounded functions. Using the fact that the event $\{k^o_0=0\}$ is equal, by definition, to $\{N_0=1\}=\{\cap_{i> 0} a_{-i}\leq i\}$, we have
\begin{align*}
\E_{\Psi^o}[g(\{a_n\}_{n< 0})f(\{a_n\}_{n\geq 0})]&=\E[g(\{a_n\}_{n< 0})f(\{a_n\}_{n\geq 0})|k^o_0=0]\\
&=\E[g(\{a_n\}_{n< 0})f(\{a_n\}_{n\geq 0})|\cap_{i> 0} a_{-i}\leq i].
\end{align*} 
Now as $\{a_n\}_{n\geq 0}$ is independent of $\{a_n\}_{n< 0}$ under $\PP$,
\begin{align*}
& \E[g(\{a_n\}_{n< 0})f(\{a_n\}_{n\geq 0})|\cap_{i> 0} a_{-i}\leq i]\\
&=\E[g(\{a_n\}_{n< 0})|\cap_{i> 0} a_{-i}\leq i]\E[f(\{a_n\}_{n\geq 0})|\cap_{i> 0} a_{-i}\leq i]\\
 &=\E_{\Psi^o}[g(\{a_n\}_{n< 0})]\E_{\Psi^o}[f(\{a_n\}_{n\geq 0})],
\end{align*}
completing the proof. 
\end{proof}

\begin{cor}
The population process $\textbf{N}$ is a stationary regenerative process with respect to $\Psi^o$ with independent cycles. 
\end{cor}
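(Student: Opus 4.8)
The plan is to verify the three requirements in the definition of ``regenerative with independent cycles'' directly, using Theorem~\ref{originalancestorsthm} to supply the renewal process $\Psi^o$ and Lemma~\ref{independencefrompastfuture} to supply the decoupling of past and future. First I would recall that $\Psi^o=\{m\in\Z:N_m=1\}$ is an integer-valued renewal process by Theorem~\ref{originalancestorsthm}, so the candidate regeneration points are already in hand; it remains only to check the independence structure of $\textbf{N}$ relative to these points. The key observation is that $\textbf{N}$ is a deterministic, shift-compatible functional of the driving sequence $\textbf{a}$: by (\ref{individualsalive}), $N_n$ depends only on $\{a_m\}_{m<n}$, and more precisely $N_0$ is a function of $\{a_{-i}\}_{i>0}$ alone. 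Hence all statements about the law of $\textbf{N}$ under $\PP_{\Psi^o}$ can be rephrased as statements about the law of $\textbf{a}$ under $\PP_{\Psi^o}$.

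The main step is to establish independent cycles, i.e. that under $\PP_{\Psi^o}$ the process $\{N_n\}_{n<0}$ is independent of $\{N_n\}_{n\geq0}$. Here I would exploit the fact that, on the Palm event $\{k^o_0=0\}=\{N_0=1\}$, the inter-renewal structure cleanly separates the arrivals: individual $0$ is the unique one alive at time $0$, so every individual born before time $0$ has already died by time $0$, meaning no edge crosses from the strict past into the present or future. Consequently, $\{N_n\}_{n<0}$ is a measurable function of $\{a_n\}_{n<0}$ (restricted suitably to the current cycle), while $\{N_n\}_{n\geq0}$ is a measurable function of $\{a_n\}_{n\geq0}$. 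Lemma~\ref{independencefrompastfuture} then gives exactly the independence of these two blocks of the driving sequence under $\PP_{\Psi^o}$, and the desired independence of $\{N_n\}_{n<0}$ and $\{N_n\}_{n\geq0}$ follows by composing with measurable maps. I would then upgrade this single-point independence to the full regenerative property: for each $j$, the pair $(\{k_n-k_{n-1}\}_{n>j},\{N_n\}_{n\geq k_j})$ is independent of $\{k_n-k_{n-1}\}_{n\leq j}$ and has a law not depending on $j$. The $j$-independence of the law is immediate from Lemma~\ref{palmpreservation}, since $\theta_{k_1}$ preserves $\PP_{\Psi^o}$ and shifts the indexing of the renewal points; the cross-independence follows from iterating the independent-cycles decomposition across successive renewal intervals, each of which is governed by a fresh, independent block of the $\hbox{i.i.d.}$ sequence $\textbf{a}$.

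The delicate point I expect to wrestle with is making rigorous the claim that, on $\{N_0=1\}$, the future population values $\{N_n\}_{n\geq0}$ genuinely depend only on $\{a_n\}_{n\geq0}$ and not on the past. A priori, $N_n$ for $n>0$ counts edges $(m,f(m))$ with $m<n$ and $f(m)>n$, and this includes indices $m<0$. The resolution is precisely the defining property of an original ancestor: when $N_0=1$, no individual born strictly before time $0$ survives past time $0$, i.e. $f(m)=m+a_m\leq 0$ for all $m<0$, so no such edge can cross any level $n\geq0$. Thus for $n\geq 0$ the count in (\ref{individualsalive}) involves only indices $m\in\{0,1,\dots,n-1\}$, confirming measurability with respect to $\{a_n\}_{n\geq0}$. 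I would state this carefully as the crux of the argument, since it is what converts the abstract independence of Lemma~\ref{independencefrompastfuture} into the concrete independent-cycles statement for $\textbf{N}$. The remaining verifications — stationarity of $\textbf{N}$ (already noted in Section~\ref{generationgeneralcase}) and the $\hbox{i.i.d.}$ inter-renewal structure of $\Psi^o$ (from Theorem~\ref{originalancestorsthm}) — are then assembled routinely.
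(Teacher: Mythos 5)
Your proof is correct and follows essentially the same route as the paper's: condition on the Palm event $\{N_0=1\}$, observe that $(\{k^o_n-k^o_{n-1}\}_{n>0},\{N_n\}_{n\geq 0})$ is then a measurable function of $\{a_n\}_{n\geq 0}$ while the past is a function of $\{a_n\}_{n<0}$, and invoke Lemma~\ref{independencefrompastfuture} together with invariance of $\PP_{\Psi^o}$ under the Palm shift to handle all $j$. Your elaboration of the crux --- that $N_0=1$ forces $f(m)\leq 0$ for every $m<0$, so no edge from the past crosses any level $n\geq 0$ and hence $\{N_n\}_{n\geq 0}$ involves only indices $m\in\{0,\dots,n-1\}$ --- makes explicit a step the paper asserts without justification.
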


\begin{proof}
Given $k^o_0=0$ (i.e., under $\PP_{\Psi^o}$), $N_0=1$ is a constant. Then \break $(\{k^o_n-k^o_{n-1}\}_{n>0},\{N_n\}_{n\geq 0})$ is a function of $\{a_n\}_{n\geq 0}$, while $\{k^o_n\}_{n\leq 0},$ is a function of $\{a_n\}_{n<0}$. Then, by Lemma \ref{independencefrompastfuture},
$(\{k_n-k_{n-1}\}_{n > 0},\{N_n\}_{n\geq  k_j})$ is independent of $\{k_n-k_{n-1}\}_{n\leq j}$ for $j=0$. Following the same reasoning as in Lemma \ref{independencefrompastfuture},  $\{a_n\}_{n\geq k_j}$ is independent of $\{a_n\}_{n<k_j}$ for all $j\in \Z$. It follows that $(\{k_n-k_{n-1}\}_{n > 0},\{N_n\}_{n\geq  k_j})$ is independent of $\{k_n-k_{n-1}\}_{n\leq j}$ for all $j$. We conclude $\{N_n\}_{n\in \Z}$ is a regenerative process. 

Independence of the random vectors $\{a_n\}_{k_j<n\leq k_{j+1}}$, also a consequence of Lemma \ref{independencefrompastfuture}, implies $\{N_n\}_{n\in \Z}$ is regenerative with independent cycles.  
\end{proof} 

\begin{rem}
When we do not assume that $\PP[a_0=1]>0$, let $\underline{m}>1$ be the smallest integer such that $\PP[a_0=\underline{m}]>0$. Then, $\PP[N_0<\underline{m}]=0$, while $\PP[N_0=\underline{m}]=\prod_{j=\underline{m}}^{\infty}\PP^0[a_0\le j]>0$. Proceeding as in the proof of Theorem \ref{originalancestorsthm}, we conclude there exists an integer-valued renewal process, $\tilde{\Phi}$ such that $k_n\in \tilde{\Phi}$ if and only if $N_{k_n}=\underline{m}.$  
\end{rem}

\subsection{Analytical properties of $N_n$}
\label{analyticsofN}

We now turn to some analytical properties of $\textbf{N}$. Some of our results are adapted from the literature on GI/GI/$\infty$ queues, i.e., queues with an infinite number of servers, and independently distributed arrival times and service times. Proofs can be found in Appendix \ref{someproofsa}. The results in this subsection do not depend on the assumption $\PP[a_0=1]>0$.

The following proposition can be extended to the case in which $\{a_n\}_{n\in \Z}$ is stationary rather than $\hbox{i.i.d.}$.
\begin{prop}
\label{proppopulation1}
For all $n\in \Z$, $\E[a_0]=\E[N_n]$.  
\end{prop}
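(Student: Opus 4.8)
The plan is to compute $\E[N_n]$ directly from the defining formula \eqref{individualsalive} and recognize the resulting sum as $\E[a_0]$. By stationarity of $\textbf{N}$, it suffices to treat $n=0$. Writing $N_0 = 1 + \#\{m < 0 : f(m) > 0\}$ and recalling $f(m) = m + a_m$, I would express the counting term as a sum of indicators: $N_0 = 1 + \sum_{m < 0} \ind\{m + a_m > 0\} = 1 + \sum_{j=1}^{\infty} \ind\{a_{-j} > j\}$, where I have set $j = -m$. The key step is then to take expectations and use Tonelli's theorem (all terms are nonnegative) to interchange expectation and the infinite sum.

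Carrying this out, I would obtain
\begin{align*}
\E[N_0] = 1 + \sum_{j=1}^{\infty} \PP[a_{-j} > j] = 1 + \sum_{j=1}^{\infty} \PP[a_0 > j],
\end{align*}
where the second equality uses stationarity (each $a_{-j}$ is distributed like $a_0$; note this step does not require independence, only that the marginals agree, which is why the result extends to the merely stationary case). It then remains to identify $1 + \sum_{j=1}^{\infty} \PP[a_0 > j]$ with $\E[a_0]$. Since $a_0$ is a positive integer-valued random variable, the standard tail-sum formula gives $\E[a_0] = \sum_{j=0}^{\infty} \PP[a_0 > j] = \PP[a_0 > 0] + \sum_{j=1}^{\infty}\PP[a_0 > j] = 1 + \sum_{j=1}^{\infty}\PP[a_0 > j]$, using $\PP[a_0 > 0] = 1$ because $a_0 \geq 1$. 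This matches the expression above and completes the identification.

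I do not anticipate a serious obstacle here, as this is essentially the observation that the expected number alive equals the mean service time—a discrete-time analogue of Little's law for the $D/GI/\infty$ queue, where the arrival rate is $1$. The only point requiring mild care is the interchange of expectation and the infinite sum, which is fully justified by Tonelli since every summand is a nonnegative indicator. I would also remark explicitly that the argument uses only the stationarity of $\{a_n\}$ through the equality of one-dimensional marginals, which is precisely why the proposition holds under the weaker stationarity hypothesis, as the statement claims, and why the assumption $\PP[a_0=1]>0$ plays no role.
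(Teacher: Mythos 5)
Your proposal is correct and follows essentially the same route as the paper's proof: both express $N_0$ as $1$ plus a sum of indicators $\ind\{a_{-j}>j\}$, take expectations termwise, and identify the resulting series of tail probabilities with $\E[a_0]$. The only difference is bookkeeping—the paper truncates the sum and invokes monotone convergence with a slightly roundabout rearrangement, while you apply Tonelli directly and quote the tail-sum formula, which is a cleaner packaging of the same argument.
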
   

Proposition \ref{proppopulation1} is the Little's law for the infinite server queue, i.e., the expected number of individuals being served equals the arrival rate times the expected service time given that there is an arrival at the origin a.s. (see, e.g. \cite{asmussen2003applied}).  This shows the mean number of customers in steady state is finite if and only if the expected service time is finite.

A general formula for the moment generating function of the number of customers in steady state in a GI/GI/$\infty$ queue can be found in \cite{yucesanrare}. We limit ourselves to showing  $N_0$ is a light-tailed random variable, regardless of the distribution of $a_0$. The latter property has the following intuitive basis: for the value of $N_0$ to be large, it is necessary that several realizations of $\{a_n\}_{n<1}$ are large as well. Another way to get an intuition for this result is if we let the arrival times to be exponentially distributed with parameter $\lambda$. Then one can show $N_0$ has a Poisson distribution with parameter $\lambda \E[a_0]$, and it is light-tailed regardless of the tail of $a_0$.

\begin{prop}
\label{propmgfMn}
The moment generating function of $N_n$ is given by 
\begin{align}
\label{mgfpopulationlookatit}
\E[e^{t N_0}]=e^{t}\prod_{i=1}^{\infty}\left(e^{t}\PP[a_0>i]+\PP[a_0\leq i]\right)~~\forall~t\in \R.
\end{align}
Moreover, $\E[e^{t N_0}]<\infty$ for all $t\in \R$.  
\end{prop}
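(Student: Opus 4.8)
The plan is to recognize $N_0-1$ as a sum of independent (though not identically distributed) Bernoulli random variables and to compute its moment generating function factor by factor. First I would reindex the count in (\ref{individualsalive}): writing $m=-i$ with $i\geq 1$, the conditions $m<0$ and $f(m)>0$ become $a_{-i}>i$, so
\[
N_0 = 1 + \sum_{i=1}^{\infty}\mathbf{1}\{a_{-i}>i\}.
\]
Because $\{a_n\}_{n\in\Z}$ is i.i.d., the indicators $X_i:=\mathbf{1}\{a_{-i}>i\}$ are independent, with $X_i$ Bernoulli of parameter $p_i:=\PP[a_0>i]$. A single factor therefore has moment generating function $\E[e^{tX_i}]=e^t\PP[a_0>i]+\PP[a_0\leq i]$.

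Next I would pass from finite to infinite products. With $S_n:=\sum_{i=1}^n X_i$, independence gives $\E[e^{tS_n}]=\prod_{i=1}^n\bigl(e^t\PP[a_0>i]+\PP[a_0\leq i]\bigr)$, and since each $X_i\geq 0$ the partial sums increase to $N_0-1$. For $t\geq 0$ the variables $e^{tS_n}$ increase to $e^{t(N_0-1)}$, so monotone convergence yields the limiting identity; for $t<0$ they are bounded by $1$ and dominated convergence applies. Multiplying by the deterministic factor $e^t$ coming from the ``$+1$'' in $N_0$ then produces (\ref{mgfpopulationlookatit}).

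Finally, to establish finiteness of the moment generating function for every $t\in\R$, I would show the infinite product converges to a finite, strictly positive limit. Writing the generic factor as $1+(e^t-1)\PP[a_0>i]$, convergence of $\prod_{i}\bigl(1+(e^t-1)\PP[a_0>i]\bigr)$ reduces to absolute convergence of $\sum_i (e^t-1)\PP[a_0>i]$. Here I would use the tail-sum identity $\sum_{i=1}^{\infty}\PP[a_0>i]=\E[a_0]-1$, which is finite precisely by the standing hypothesis $\E[a_0]<\infty$; this simultaneously forces the factors to approach $1$ and, since $e^t-1>-1$, to remain positive. The main point to emphasize is that it is the summability of the tail probabilities --- rather than any tail bound on $a_0$ itself --- that makes the product, and hence the moment generating function, finite for all real $t$, which is the light-tail assertion. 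The measure-theoretic interchanges in the previous step are routine once the $t\geq 0$ / $t<0$ split has been made.
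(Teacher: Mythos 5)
Your proposal is correct and follows essentially the same route as the paper: write $N_0-1=\sum_{i\geq 1}\mathbf{1}\{a_{-i}>i\}$ as a sum of independent Bernoulli variables, factor the moment generating function into the infinite product, and deduce finiteness from the summability of the tails $\sum_i\PP[a_0>i]\leq\E[a_0]<\infty$ via the standard convergence criterion for infinite products (the paper's Lemma \ref{populatioauxlem1}). Your treatment is in fact slightly more careful than the paper's on two minor points --- justifying the finite-to-infinite product passage by monotone/dominated convergence, and handling $t<0$ via absolute convergence rather than the nonnegative-terms lemma --- but these are refinements of the same argument, not a different one.
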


\subsection{Geometric marks}
\label{geometricmarks}

In general, the process $\textbf{N}$ is not Markovian. However, it is when the marks are geometrically distributed. Let us assume $a_0$ is geometrically distributed supported on $\bbN_+$ with parameter $s$. Let $r=1-s$.  Then, due to the memoryless property, $\textbf{N}$ is a time-homogeneous, aperiodic, and irreducible Markov chain with state space $\{1,2,3,\ldots\}$, whose transition matrix is given by
\begin{align}
\label{populationmarkovchain}
	\PP[N_n=n|N_{n-1}=k]&=\left(\begin{array}{c} k \\ n-1\end{array}\right) r^{n-1} s^{k-n+1},~1\leq n\leq k+1.
\end{align}

\begin{prop}
In the geometric case, the probability generating function of $N_n$ at steady-state obeys the following functional relation 
\begin{align}
\label{mgfpopulation}
	G(z)=zG(sz+r).
\end{align}
\end{prop}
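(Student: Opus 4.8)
The plan is to read off the functional equation from the one-step dynamics of the chain in the geometric case, the crucial ingredient being the lack-of-memory property. First I would record the stochastic recursion satisfied by $\textbf{N}$. Because $\textbf{N}$ is $\{\theta_n\}_{n\in\Z}$-compatible, every $N_n$ has the same law, whose probability generating function is the $G$ of the statement. Now fix $n$ and condition on $N_{n-1}=k$; the $k$ individuals alive at time $n-1$ are the edges crossing $n-1$ together with the individual born at $n-1$. Such an individual is still alive at time $n$ exactly when its service has not yet expired, and by memorylessness of the geometric law each of them survives one further step independently of the others with probability $s$ (and departs with probability $r=1-s$), the residual service times being i.i.d.\ geometric and independent of the count $N_{n-1}$. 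Since exactly one new individual is born at time $n$, writing $\xi_1,\dots,\xi_k$ for the survival indicators gives
\[
N_n \;=\; 1+\sum_{i=1}^{N_{n-1}}\xi_i,
\]
with the $\xi_i$ Bernoulli of mean $s$ and jointly independent of $N_{n-1}$.

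Second, I would pass to generating functions. Conditioning on $N_{n-1}=k$ and using the conditional independence of the $\xi_i$,
\[
\E\!\left[z^{N_n}\mid N_{n-1}=k\right]\;=\;z\,\bigl(\E[z^{\xi_1}]\bigr)^{k}\;=\;z\,(sz+r)^{k},
\]
because $\E[z^{\xi_1}]=sz+(1-s)=sz+r$. Taking expectations over $N_{n-1}$ and using that $N_{n-1}$ has generating function $G$,
\[
\E\!\left[z^{N_n}\right]\;=\;z\,\E\!\left[(sz+r)^{N_{n-1}}\right]\;=\;z\,G(sz+r).
\]
On the other hand, stationarity gives $\E[z^{N_n}]=G(z)$, and equating the two expressions yields the claimed relation $G(z)=zG(sz+r)$; note that for $z\in[0,1]$ one has $sz+r\in[r,1]\subseteq[0,1]$, so the right-hand side is well defined.

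I expect the main obstacle to be the rigorous justification of the thinning step: one must verify that, conditionally on $N_{n-1}=k$, the survival indicators of the $k$ distinct alive individuals are truly independent, share the survival probability $s$, and are jointly independent of $N_{n-1}$. This is precisely where the lack-of-memory property is used, as it ensures that the residual lifetimes of the individuals counted in $N_{n-1}$ are i.i.d.\ geometric irrespective of how long each has already been alive, so that their one-step survival events form a Bernoulli thinning of parameter $s$ of the current population. Once this is in place the remaining steps are routine generating-function manipulations, and the derivation above can be made fully rigorous.
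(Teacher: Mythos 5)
Your overall route is the same as the paper's: condition on $N_{n-1}=k$, use the lack-of-memory property to recognize the one-step dynamics as a Bernoulli thinning of the current population plus one new arrival, compute the conditional pgf $z\bigl(\E[z^{\xi_1}]\bigr)^k$, and close the equation. The only structural difference is that you work directly with the stationary chain and invoke $\E[z^{N_n}]=\E[z^{N_{n-1}}]=G(z)$, whereas the paper iterates $G_m(z)=zG_{m-1}(rz+s)$ for the chain $\tilde N_m$ started at $0$ and lets $m\to\infty$; your use of stationarity is, if anything, the cleaner of the two, since it avoids justifying the convergence $G_m\to G$.

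There is, however, a genuine error in the one quantitative step on which everything hinges: the survival probability. With the paper's parameterization, $a_0$ is geometric on $\bbN_+$ with parameter $s$, so $\PP[a_0=k]=sr^{k-1}$, $\PP[a_0>i]=r^i$, and $\E[a_0]=1/s$. Memorylessness then says that an individual alive at time $n-1$ survives one further step with probability $\PP[a_m>n-m\mid a_m>n-1-m]=r$, \emph{not} $s$: the parameter $s$ is the per-step death probability. Hence $\E[z^{\xi_1}]=rz+s$, the conditional pgf is $z(rz+s)^k$ --- exactly what the transition matrix (\ref{populationmarkovchain}) encodes via $\binom{k}{j-1}r^{j-1}s^{k-j+1}$, and what the paper's proof derives --- and the functional equation is $G(z)=zG(rz+s)$. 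Your version $G(z)=zG(sz+r)$ would yield $\E[N_0]=G'(1)=1/(1-s)=1/r$, contradicting both $\E[N_0]=\E[a_0]=1/s$ (Proposition \ref{proppopulation1} and the paper's own moment computation) and the product formula $G(z)=z\prod_{i\geq 1}(zr^i+1-r^i)$ obtained from (\ref{mgfpopulationlookatit}) with $\PP[a_0>i]=r^i$. To be fair, the display in the proposition statement itself has $r$ and $s$ transposed relative to the paper's proof, moments, and remark, so you appear to have back-fitted the survival probability to match a misprint; once you take the per-step survival probability to be $r$, your argument goes through verbatim with $r$ and $s$ exchanged, and your flagged "main obstacle" (joint independence of the survival indicators given $N_{n-1}$) is resolved correctly as you describe, by conditioning on which individuals are alive and using independence of the $a_m$.
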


\begin{proof}
Let $\{\tilde{N}_n\}_{n\geq 0}$ the population process starting at $0$ and $G_m(z)$, $z\in [0,1]$, the probability generating function of $\tilde{N}_m$. Using (\ref{populationmarkovchain}),

\begin{align*}
\mathbb{E}[z^{\tilde{N}_n}|\tilde{N}_{n-1}=k]&=\sum_{n=1}^{k+1}\left(\begin{array}{c} k \\ n-1\end{array}\right) r^{n-1} s^{k-n+1}z^n=z\sum_{n'=0}^{k}\left(\begin{array}{c} k \\ n'\end{array}\right) r^{n'} s^{k-n'}z^{n'},
\end{align*}
where $n'=n-1$. 
So, 
\begin{align*}
\mathbb{E}[z^{\tilde{N}_n}|\tilde{N}_{n-1}=k]&=z(rz+s)^k.
\end{align*}
Hence, 
\begin{align*}
	G_m(z)&=\sum_{k=1}^{m} \PP[\tilde{N}_{m-1}=k](z(rz+s)^k)=zG_{m-1}(rz+s).
\end{align*}
Letting $m\to \infty$ we get (\ref{mgfpopulation}). 

\end{proof} 

Using (\ref{mgfpopulation}) we can easily compute the moments of $N_0$. For example, by differentiating both sides and setting $z=1$, we get
\begin{align}
	\E[N_0]&=\frac{1}{s},
\end{align}
which is the mean of $a_0$, as expected. Proceeding the same way, the second moment is given by
\begin{align}
	\E[N_0^2]&=\frac{2r}{s(1-r^2)}.
\end{align}

\begin{rem}
By (\ref{mgfpopulationlookatit}), and noticing $\PP[a_i>i]=r^i$, we get
\begin{align}
G(z)&=z\prod_{i=1}^{\infty}(zr^i+1-r^i).
\end{align}

By Lemma \ref{populatioauxlem1} in Appendix \ref{someproofsa}, $\prod_{i=1}^{\infty}(zr^i+1-r^i)$ converges for all $z\in \R$ as $\sum_{i=1}^{\infty} r^i<\infty$. 
 \end{rem} 

\section{The Eternal Family Tree} 
\label{netree}

In this section we study the directed graph $T^f=(V^f,G^f)$, where $V^f=\Z$ and $E^f=\{(n,f(n)):n\in \Z\}$. In Section \ref{globalpropertiesoftf}, we show $T^f$ is an infinite tree containing a unique bi-infinite path. The indexes of the nodes on this bi-infinite path form a ${s.s.p.p.}$ on $\Z$ with positive intensity. We derive this result by exploiting the fact that $T^f$ is an \textit{Eternal Family Tree}, i.e., the out-degrees of all vertices are exactly one \cite{BOAunimodular}.  
 
In the following two sections, we delve deeper into the genealogy of $T^f$. 
In Section \ref{genealogyofTf}, we give the basic properties of certain ${s.s.p.p.}$s derived from $T^f$. First, the process of integers forming the bi-infinite path: each integer in it is called \emph{successful}, since its lineage (the set of its descendants) has infinite cardinality a.s. Second, we consider the process coming from the complement of the bi-infinite path: each integer in it is called  {\em ephemeral}, since its lineage is finite a.s. Third, we consider the process of original ancestors, defined in Theorem \ref{originalancestorsthm}, which is a subprocess of the first.  

In Section \ref{directephemeralsubsubsection}, we look at the set of direct ephemeral descendants of a successful integer $n$, whose path to $n$ on $T^f$ consists only of ephemerals.  The conservation law that defines unimodular networks allows us to establish probabilistic properties of the set of direct ephemerals descendants and the set of cousins of a typical successful node. 

\subsection{The global properties of $T^f$} 
\label{globalpropertiesoftf}

\begin{thm}
\label{genealogythm}
The directed random graph $T^f$ is a tree with a unique bi-infinite path for which the corresponding nodes, when mapped to $\Z$, form a ${s.s.p.p.}$ with positive intensity.
\end{thm}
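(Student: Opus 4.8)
The plan is to establish the three assertions separately: that $T^f$ is a tree, that it carries a unique bi-infinite path, and that the nodes of that path form a stationary simple point process of positive intensity. Two elementary observations drive everything. First, every vertex of $T^f$ has out-degree exactly one (it points to $f(n)$), so $T^f$ is an \emph{Eternal Family Tree} in the sense of \cite{BOAunimodular}. Second, since $a_n\ge 1$ we have $f(n)>n$ for every $n$, so each forward orbit $n,f(n),f^2(n),\dots$ is strictly increasing and escapes to $+\infty$; in particular $T^f$ has no directed cycle. The engine of the whole argument is Theorem \ref{originalancestorsthm} together with the following consequence of $N_{k^o}=1$: if $k^o$ is an original ancestor then every $m<k^o$ satisfies $f(m)\le k^o$, whence the orbit of $m$ is strictly increasing, never exceeds $k^o$, and therefore lands \emph{exactly} on $k^o$ after finitely many steps. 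Thus every integer below $k^o$ is a descendant of $k^o$, and any two forward orbits coalesce: given $m,n$, pick (by ergodicity there are infinitely many) an original ancestor $k^o>\max(m,n)$, and both orbits pass through $k^o$.

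First I would prove $T^f$ is a tree. Coalescence at a common original ancestor shows $T^f$ is connected. For acyclicity, suppose the underlying undirected graph had a cycle on vertices $v_0,\dots,v_{k-1}$ with its $k$ edges. Each edge $\{u,w\}$ is of the form $(v,f(v))$ and hence is the unique out-edge of exactly one of its endpoints; counting, the $k$ cycle edges are ``owned'' by cycle vertices, and since each vertex has total out-degree one, each cycle vertex owns exactly one cycle edge. Consequently $f$ maps $\{v_0,\dots,v_{k-1}\}$ into itself, and iterating $f$ inside this finite set produces a directed cycle, contradicting $f(n)>n$. Hence $T^f$ is connected and acyclic, i.e. a tree (locally finite a.s., since $\E[\#f^{-1}(n)]=\sum_{k\ge 1}\PP[a_0=k]=1$).

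Next, the unique bi-infinite path. Because the original ancestors $\dots<k^o_{-1}<k^o_0\le 0<k^o_1<\dots$ are cofinal in both directions a.s., and because the orbit of $k^o_i$ reaches $k^o_{i+1}$, concatenating the finite $f$-orbit segments from $k^o_i$ to $k^o_{i+1}$ produces a single bi-infinite $f$-path $S$ (the spine), proving existence. For uniqueness, let $P=\{u_j\}_{j\in\Z}$ be any bi-infinite path with $f(u_j)=u_{j+1}$; its backward ray has strictly decreasing integer values, so $u_j\to-\infty$. Given any original ancestor $k^o$, choose $j$ with $u_{-j}<k^o$; the forward orbit of $u_{-j}$ is exactly the forward tail of $P$ and must hit $k^o$, so $k^o\in P$. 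Thus $\Psi^o\subseteq P$; since the original ancestors are cofinal both ways and the $f$-path between two consecutive ones is forced, $P=S$. The main obstacle is precisely this uniqueness step: the forward (upward) end is automatically unique by coalescence, but pinning down the backward (downward) end requires the observation that every bi-infinite path is forced through all original ancestors.

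Finally, the point process. The spine coincides with the set of \emph{successful} integers: a spine vertex has the infinite backward ray as descendants, and conversely a vertex with infinitely many descendants has its forward orbit on the spine and (by K\"onig's lemma, the descendant subtree being infinite and locally finite) an infinite descending ray, hence lies on a bi-infinite path, which is $S$ by uniqueness. Successfulness is defined from the tree and hence from $\{a_m\}$ in a translation-covariant way, so $U_n:=\ind\{n\text{ successful}\}$ satisfies $U_n=U_0\circ\theta_n$ and is stationary; the spine is therefore a simple stationary point process. Its intensity is positive because $\Psi^o$ is contained in the spine, giving $\PP[0\text{ successful}]\ge \lambda^o=\prod_{i\ge 1}\PP[a_0\le i]>0$. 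I would close by noting this also re-proves non-emptiness and, with ergodicity, that the spine is a.s. infinite, so that it is indeed a bona fide s.s.p.p. of positive intensity.
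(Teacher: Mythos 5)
Your proof is correct, but it takes a genuinely different route from the paper's. The paper does not argue combinatorially at all: it observes that $f$ is a covariant vertex-shift acting on a unimodular network, invokes the classification theorem of \cite{BOAunimodular} (every connected component is of class \textbf{F}/\textbf{F}, \textbf{I}/\textbf{F}, or \textbf{I}/\textbf{I}), rules out \textbf{F}/\textbf{F} because $f(n)>n$ forbids cycles, proves there is a single connected component (Proposition \ref{uniquecomponentproposition}, again via original ancestors), and then uses Lemma \ref{ClassificationLemma} together with $d(k)=\infty$ a.s.\ for $k\in\Psi^o$ to conclude the component is of class \textbf{I}/\textbf{F}; the tree structure, the unique bi-infinite path, and the positive-intensity ${s.s.p.p.}$ property are then all imported as part of what class \textbf{I}/\textbf{F} means. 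You instead re-derive those imported conclusions from scratch: acyclicity by the out-degree-one counting argument, existence of the spine by concatenating orbit segments between consecutive original ancestors, uniqueness by forcing every bi-infinite path through all original ancestors, and the identification of the spine with the successful integers via K\"onig's lemma and local finiteness. The common core of both arguments is the same engine — Theorem \ref{originalancestorsthm} and the coalescence property $N_{k^o}=1\Rightarrow f(m)\le k^o$ for all $m<k^o$ — but what the paper's route buys is brevity and placement of the result inside the unimodular framework that the rest of Section \ref{netree} exploits (mass transport, foils, the \textbf{I}/\textbf{F} terminology), while your route buys self-containedness, avoids the classification theorem as a black box, and yields as by-products two facts the paper only establishes later or implicitly: that the bi-infinite path coincides exactly with the set $\Phi^s$ of successful integers, and the explicit bound $\lambda^s\ge\lambda^o=\prod_{i\ge 1}\PP[a_0\le i]>0$.
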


In order to prove Theorem \ref{genealogythm}, we resort to recent results on dynamics on unimodular networks \cite{BOAunimodular}. In Appendix \ref{Unimodulardynamics}, we present a brief review of definition and properties of unimodular networks that we use. First, we notice the directed graph $G=(V,E)$, where $V=\Z$ and $E=\{(n,n+1):n\in \Z\}$ in which each node $n$ has mark $a_n$, rooted at $0$, is a locally finite unimodular random network. Second, we notice $f$ is a translation-invariant dynamics on this network, more precisely, a \emph{covariant vertex-shift} (see Appendix \ref{Unimodulardynamics}, Definition \ref{covariantvertexshift}).  

Define the \emph{connected component} of $n$ as 
\begin{align}
\label{connectedcomponent}
C(n)=\{m\in \Z~\hbox{s.t.}~\exists~i,j\in \bbN~\hbox{with}~f^i(n)=f^j(m)\}.
\end{align} 

\begin{prop}
\label{uniquecomponentproposition}
The directed random graph $T^f$ has only one connected component. 
\end{prop}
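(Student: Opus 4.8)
\section*{Proof proposal for Proposition \ref{uniquecomponentproposition}}

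The plan is to deduce the result from the original ancestors of Theorem \ref{originalancestorsthm}. The crucial observation is that an original ancestor $k$ (an integer with $N_k=1$) traps the forward orbits of all smaller integers. Indeed, by (\ref{individualsalive}), $N_k=1$ means there is no $m<k$ with $f(m)>k$, so every $m<k$ satisfies $m<f(m)\le k$. I would isolate this as a lemma: \emph{if $N_k=1$ and $m\le k$, then $f^i(m)=k$ for some $i\ge 0$.} To prove it, set $i^\ast=\min\{i\ge 0:\ f^i(m)\ge k\}$, which is finite because $a_n\ge 1$ makes the orbit strictly increasing, hence $f^i(m)\to+\infty$. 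If $i^\ast=0$ then $m\ge k$, which with $m\le k$ forces $m=k$; if $i^\ast\ge 1$ then $f^{i^\ast-1}(m)<k$, so the defining property of $k$ gives $f^{i^\ast}(m)=f(f^{i^\ast-1}(m))\le k$, and combined with $f^{i^\ast}(m)\ge k$ this yields $f^{i^\ast}(m)=k$.

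Next I would record that $\Psi^o=\{m:N_m=1\}$ is a.s.\ unbounded above. This is immediate from Theorem \ref{originalancestorsthm}: $\Psi^o$ is an integer-valued renewal process of positive intensity, so its atoms $\{k^o_n\}_{n\in\Z}$ satisfy $k^o_n\to+\infty$ as $n\to+\infty$, a.s. (equivalently, this is already a byproduct of the Birkhoff argument used in that proof).

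With these two ingredients the proposition is short. Fix $n,m\in\Z$ and, on the a.s.\ event that $\Psi^o$ is unbounded above, choose $k\in\Psi^o$ with $k>\max(n,m)$. Applying the lemma to $n<k$ and to $m<k$ gives $i,j\ge 1$ with $f^i(n)=k=f^j(m)$, so the two orbits meet at $k$ (and coincide thereafter, since $f$ is deterministic). Thus $m\in C(n)$ in the sense of (\ref{connectedcomponent}). Since $n,m$ were arbitrary, $C(n)=\Z$ a.s.; as membership in a common component is symmetric and transitive (two orbits that each eventually agree with a third agree with one another), this is an equivalence relation with a single class, i.e.\ $T^f$ has exactly one connected component.

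The only real content is the confinement lemma relating $N_k=1$ to the trapping of lower orbits; everything else is bookkeeping. The one point needing care is that original ancestors occur arbitrarily far to the right a.s., but this is exactly what the renewal structure of $\Psi^o$ from Theorem \ref{originalancestorsthm} supplies, so I do not anticipate a genuine obstacle.
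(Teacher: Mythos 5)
Your proposal is correct and follows essentially the same route as the paper: the paper's proof also takes an original ancestor $m\in\Psi^o$, asserts that every $n<m$ lies in $C(m)$, and concludes from the fact that $\Psi^o$ is a ${s.s.p.p.}$ with infinitely many points a.s. The only difference is one of detail: your confinement lemma (the $i^\ast$ argument showing the orbit of any $n<k$ hits $k$ exactly when $N_k=1$) is a careful justification of the step the paper states without proof, so it is a welcome elaboration rather than a different approach.
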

\begin{proof}
Consider the process of original ancestors, $\Psi^o$, as defined in Theorem \ref{originalancestorsthm}, and let $m\in \Psi^o$. Then, for every $n<m$, $n\in C(m)$. Hence, the result follows from the fact that $\Psi^o$ is a ${s.s.p.p.}$ consisting of an infinite number of points $\hbox{a.s.}$
\end{proof}

Let 
\begin{align}
\label{descendants}
D(n)=\{m\in \Z~\hbox{s.t.}~\exists~j\in \bbN~\hbox{with}~f^i(m)=n\}	
\end{align} 
denote the set of {\em descendants} of $n$. Also let, 
\begin{align}
\label{foilofn}
L(n)=\{m\in \Z~\hbox{s.t.}~\exists~j\in \bbN~\hbox{with}~f^j(m)=f^j(n)\}	
\end{align}
denote the set of {\em cousins} of $n$ of all degrees (this set is referred to as the {\em foil} of $n$ in \cite{BOAunimodular}). 

We further subdivide $D(n)$ and $L(n)$ in terms
\begin{align}
D_i(n)=\{m\in \Z~\hbox{s.t.}~f^i(m)=n\},~i\geq 0
\end{align}
and
\begin{align}
\label{foilofol}
L_i(n)=\{m\in \Z~\hbox{s.t.}~f^i(m)=f^i(n)\},~i\geq 0.
\end{align}
So $D_i(n)$ is the set of descendants of degree $i$ of $n$ and $L_i(n)$ the set of cousins of degree $i$ of $n$. 

Lower case letters denote the cardinalities of the above sets. So $c(n)$ is the cardinality of $C(n)$, $d_i(n)$ is the cardinality of $D_i(n)$ and so on. Moreover, $d_{\infty}(n)$ denotes the weakly limit of $d_i(n)$ (if such a limit exists).

In \cite{BOAunimodular}, it is shown that each connected component  of a graph  generated by the action of a covariant vertex-shift on a unimodular network, $C(n)$, falls within one of the following three categories: 
\begin{enumerate}
\item Class \textbf{F}/\textbf{F}: $c(n)<\infty$ and for all $v\in C(n)$, $l(v)<\infty$. In this case $C(n)$ has a unique cycle.   
\item Class \textbf{I}/\textbf{F}: $c(n)=\infty$ and for all $v\in C(n)$, $l(v)<\infty$. In this case, $C(n)$ is a tree containing a unique bi-infinite path. Moreover, the bi-infinite path forms a ${s.s.p.p.}$ on $\Z$ with positive intensity. 
\item Class \textbf{I}/\textbf{I}: $c(n)=\infty$ and for all $v\in C(n)$, $l(v)=\infty$. In this case, $C(n)$ is also a tree such that $d_{\infty}(v)=0$ for all $v\in C(n)$. 
\end{enumerate}
Notice the dynamics $f$ precludes the connected component $\Z$ of being of class \textbf{F}/\textbf{F}. 
Theorem \ref{genealogythm} follows from proving that, in our case, $C(0)$ is of class \textbf{I}/\textbf{F}.  We rely on the following lemma derived from the results found in  \cite{BOAunimodular}. 

\begin{lem}
\label{ClassificationLemma}
A connected component $C(m)$ is of class \textbf{I}/\textbf{I} if and only if for all $n\in C(m)$, $\PP[d(n)=\infty]=0$. 
\end{lem}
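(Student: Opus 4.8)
The plan is to recast the foil-based trichotomy into a statement about the number of descendants of individual vertices, the bridge being the elementary fact that, in a locally finite rooted tree, having infinitely many descendants is equivalent to being the top of an infinite descending ray. Throughout I use that, by Proposition~\ref{uniquecomponentproposition}, $T^f$ has a single connected component, namely all of $\Z$, which is infinite; hence it cannot be of class \textbf{F}/\textbf{F}, and by the trichotomy recalled above it is either \textbf{I}/\textbf{F} or \textbf{I}/\textbf{I}. By ergodicity (Remark~\ref{aismixing}) the class is a.s.\ constant, so the statement ``$C(m)$ is of class \textbf{I}/\textbf{I}'' is a well-defined a.s.\ event whose negation is ``$C(m)$ is of class \textbf{I}/\textbf{F}''. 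Since, by the definition of the classes, an infinite component is \textbf{I}/\textbf{F} exactly when it contains a bi-infinite path (and \textbf{I}/\textbf{I} otherwise), it suffices to prove the equivalence: $T^f$ contains a bi-infinite path a.s.\ if and only if $\PP[d(n)=\infty]>0$ for some $n$.

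First I would establish local finiteness, which is what licenses K\"onig's lemma. Each vertex has out-degree one, while the in-degree of $n$ equals $\#\{k\ge 1: a_{n-k}=k\}$; the indicators $\ind\{a_{n-k}=k\}$ are independent over $k$ with $\sum_{k\ge1}\PP[a_0=k]=1<\infty$, so by the first Borel--Cantelli lemma the in-degree of each fixed vertex is finite a.s., and hence a.s.\ every vertex of $T^f$ has finite degree.

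Next I would prove the structural equivalence. Fix $n$ and consider the subtree of descendants rooted at $n$, in which the children of a vertex $v$ are the elements of $f^{-1}(v)$. If $d(n)=\infty$, this subtree is infinite and locally finite, so by K\"onig's lemma it contains an infinite descending ray $n=w_0,w_1,w_2,\dots$ with $f(w_{j+1})=w_j$; conversely, such a ray clearly forces $d(n)=\infty$. Because $T^f$ is a tree, the ancestral orbit $n,f(n),f^2(n),\dots$ consists of distinct vertices and is therefore an infinite ascending ray, disjoint (apart from $n$) from any descending ray. Concatenating the two rays yields a bi-infinite path through $n$; conversely, if $\cdots\to v_{-1}\to v_0\to v_1\to\cdots$ is a bi-infinite path then $v_{-1},v_{-2},\dots$ are infinitely many descendants of $v_0$, so $d(v_0)=\infty$. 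This shows that the event $\{T^f \text{ contains a bi-infinite path}\}$ coincides with $\bigcup_{n\in\Z}\{d(n)=\infty\}$.

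Finally I would assemble the two implications by a routine measure-theoretic step exploiting the countability of $\Z$. If $\PP[d(n)=\infty]=0$ for all $n$, then by countable subadditivity $\PP\big[\bigcup_{n}\{d(n)=\infty\}\big]=0$, so a.s.\ there is no bi-infinite path and the component is \textbf{I}/\textbf{I}. Conversely, if the component is \textbf{I}/\textbf{I} there is a.s.\ no bi-infinite path, whence $\PP\big[\bigcup_n\{d(n)=\infty\}\big]=0$ and therefore $\PP[d(n)=\infty]=0$ for every $n$; equivalently, if $\PP[d(n_0)=\infty]>0$ for some $n_0$, then by stationarity $\PP[d(n)=\infty]>0$ for all $n$, the union has positive probability, a bi-infinite path exists a.s., and the component is \textbf{I}/\textbf{F}. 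I expect the only delicate point to be the structural equivalence of the previous paragraph---securing local finiteness so that K\"onig's lemma applies, and checking that the ascending orbit is genuinely an infinite ray (which uses the absence of cycles in $T^f$); the passage from ``for some $n$'' to ``for all $n$'' is immediate from countability and stationarity.
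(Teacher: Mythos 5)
Your combinatorial groundwork is correct: local finiteness of $T^f$ via Borel--Cantelli, the K\"onig's-lemma equivalence between $d(n)=\infty$ and the existence of an infinite descending ray at $n$, the concatenation with the forward orbit, and the countability/stationarity bookkeeping are all sound, and they correctly reduce the lemma to the statement that an infinite component is of class \textbf{I}/\textbf{F} if and only if it contains a bi-infinite path. The gap is that you dispose of this last statement with the phrase ``by the definition of the classes'', and that is not what the definition says. The classes are defined by foil cardinalities: \textbf{I}/\textbf{F} means every $l(v)<\infty$, \textbf{I}/\textbf{I} means every $l(v)=\infty$. The implication ``\textbf{I}/\textbf{F} $\Rightarrow$ unique bi-infinite path'' is a quoted \emph{conclusion} of the classification theorem of \cite{BOAunimodular}, so the direction of the lemma going from ``all $\PP[d(n)=\infty]=0$'' to ``class \textbf{I}/\textbf{I}'' survives. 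But the converse implication, ``\textbf{I}/\textbf{I} $\Rightarrow$ no bi-infinite path'', is neither a definition nor something you prove, and via your own K\"onig bridge it is exactly the hard half of the lemma; in effect you assume what was to be shown. This implication is genuinely not graph-combinatorial: it fails without unimodularity. Take the spine $\Z$ with $f(n)=n+1$ and attach to each spine vertex $n$ an infinite descending path $t^1_n,t^2_n,\dots$ with $f(t^1_n)=n$ and $f(t^{k+1}_n)=t^k_n$. Every foil of this tree is infinite, yet it contains a bi-infinite path and every spine vertex has infinitely many descendants; of course, this tree admits no unimodular rooting (mass transport toward the spine fails), which is precisely the point -- unimodularity must enter, and it never enters your argument.

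The repair is available from what the paper quotes about class \textbf{I}/\textbf{I}: on that event, $d_\infty(v)=0$ for all $v$, i.e.\ $d_i(v)\to 0$ weakly. A vertex $v_0$ on a bi-infinite path $(v_i)_{i\in\Z}$ satisfies $f^i(v_{-i})=v_0$, hence $d_i(v_0)\geq 1$ for every $i$, so $\PP[v_0 \hbox{ lies on a bi-infinite path}]\leq \PP[d_i(v_0)\geq 1]\to 0$; summing over the countably many vertices shows that, on the event \textbf{I}/\textbf{I}, almost surely no bi-infinite path exists. Inserting this closes your proof. (A cosmetic point: justify the distinctness of the forward orbit by $f(n)=n+a_n>n$ rather than ``because $T^f$ is a tree'', since tree-ness of $T^f$ is itself a downstream conclusion of the classification.) For comparison, the paper offers no proof at all: it imports the lemma from \cite{BOAunimodular}, where the fact that class-\textbf{I}/\textbf{I} components have a.s.\ only finite descendant sets is established by a mass-transport argument. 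Your reductions are a genuine, self-contained supplement to that citation, but they do not substitute for this one nontrivial input.
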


\begin{proof}[Proof of Theorem \ref{genealogythm}]
For $k\in \Psi^o$, we have $d(k)=\infty$ a.s., and consequently, $C(k)$ is of class \textbf{I}/\textbf{F}. Since there is a unique component, the result follows. 
\end{proof} 

\begin{rem}
Coming back to the case where it is not assumed that $\PP[a_0=1]>0$, let $\underline{m}>1$ be the smallest integer such that $\PP[a_0=\underline{m}]>0$. 
Define the sets
\begin{align*}
Y(i)&=\{m\in \Z:~\PP[m\in C(i)]>0\}~\hbox{for $i\in \{0,\ldots,\underline{m}-1\}$},
\end{align*}
and notice $\{Y(i)\}_{i=0}^{\underline{m}-1}$ forms a partition of $\Z$. Let
\begin{align*}
N^i_m=\#\{m<n, m\in Y(i): f(m)>n\}+1.
\end{align*}
Then, using the same arguments as in the proof of Theorem \ref{originalancestorsthm}, one can show that, for each $i\in \{0,\ldots,\underline{m}-1\}$, there exists a ${s.s.p.p}$, $\Psi_o^i$, such that $k \in \Psi_i^o$ if and only if $N^i_k=1$. It follows that $Y(i)=C(i)$.  Hence, $T^f$ has $\underline{m}$ connected components. By translation invariance, all connected components are of class \textbf{I}/\textbf{F}. In this case, $T^f$ is a forest. 
\end{rem}

\begin{defi}[Diagonally invariant functions]
\label{translationinvariant} 
A measurable function $g: \Omega \times \Z\times \Z\to \R$ is said to be diagonally invariant if 
$h(\theta_k(\omega),m,n)=h(\omega,m+k,+k)$ for all $k,m,n\in \Z$.
\end{defi}

Finally, $T^f$ itself is a unimodular network (\cite{BOAunimodular}). Unimodularity is characterized by the fact such a network obeys {\em the mass transport principle} (see Appendix \ref{Unimodulardynamics} and \cite{BOAunimodular}). In our setting, the mass transport principle takes the following form, recalling $0$ is by convention the root of $T^f$: for all diagonally invariant functions $h$ (Definition \ref{translationinvariant}),
\begin{align}
\label{themtp}
\E\left[\sum_{n\in \Z} h(n,0)\right]=\E\left[\sum_{n\in \Z} h(0,n)\right]. 
\end{align} 

\subsection{Successful and ephemeral individuals, and original ancestors}
\label{genealogyofTf}

From the analysis of the population process and the shape of $T^f$, we learned $f$ defines three ${s.s.p.p.}$s on $\Z$ related to the genealogy of $T^f$:
\begin{enumerate}
\item $\Phi^s$: the set of successful individuals, consisting of individuals $n\in \Z$ having an infinite number of descendants in $T^f$.
\item $\Psi^o$: the set of original ancestors (defined in Section \ref{populationdynamics}), consisting of all individuals $n\in \Z$ such that for all $m<n$,  $m$ is a descendant of $n$ in $T^f$. Clearly, $\Psi^o\subset \Phi^s$. 
\item $\Phi^e$: the set of ephemeral individuals, consisting of individuals $n\in \Z$ which have a finite number of descendants in $T^f$. Clearly, $\Phi^e\cup \Phi^s=\Z$. 
\end{enumerate}
We now look at the basic properties of these processes. In what follows we let $\E^s:=\E_{\Phi^s}$, i.e., $\E^s$ is the expectation operator of the Palm probability of $\Phi^s$. In the same vein, $\E^e:=\E_{\Phi^e}$, and $\E^o:=\E_{\Psi^o}$.

\begin{prop}
Let $\lambda^s$ be the intensity of $\Phi^s$. Then, 
\begin{align}
\label{lambdas}
\lambda^s=\frac{1}{\E[a_0]}.
\end{align}
It follows that the intensity of $\Phi^e$, $\lambda^e$, equals $1-\frac{1}{\E[a_0]}$. 
\end{prop}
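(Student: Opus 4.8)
**

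The plan is to compute the intensity $\lambda^s$ of the successful point process $\Phi^s$ by relating it to the process of original ancestors $\Psi^o$, whose intensity is already known, and exploiting the regenerative structure established in Section \ref{populationdynamics}. The key observation is that successful individuals are exactly the nodes on the unique bi-infinite path of $T^f$, and between two consecutive original ancestors there is a well-defined ``regeneration cycle'' of the population process. Since original ancestors are themselves successful ($\Psi^o\subset \Phi^s$) and the cycles are i.i.d.\ under $\PP_{\Psi^o}$, I would count the expected number of successful individuals per original-ancestor cycle and invoke the renewal-reward / cycle formula: the intensity of $\Phi^s$ equals the intensity $\lambda^o$ of $\Psi^o$ times the expected number of successful nodes in one cycle.

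First I would set up the cycle decomposition: under $\PP_{\Psi^o}$ with $k^o_0=0$, let the first return be $k^o_1$, so the mean cycle length is $1/\lambda^o$ by the standard inverse-intensity relation for stationary point processes on $\Z$. By the renewal-reward theorem applied to the stationary sequence, one has
\begin{align}
\label{cycleformula}
\lambda^s=\lambda^o\,\E^o\!\left[\#\{0\le n<k^o_1: n\in \Phi^s\}\right].
\end{align}
The main content is therefore to evaluate the expected number of successful individuals strictly between two consecutive original ancestors (including the left endpoint). Alternatively, and perhaps more cleanly, I would bypass the cycle count entirely and argue directly via Little's law as already recorded in Proposition \ref{proppopulation1}: since $\E[N_0]=\E[a_0]$, and the bi-infinite path of successful individuals is precisely the ``backbone'' traced out by the mesh of renewal processes, there should be on average exactly one successful node per $\E[a_0]$ integers, giving $\lambda^s=1/\E[a_0]$ immediately.

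The cleanest route, however, is through the mass transport principle \eqref{themtp}. The idea is to send unit mass from each integer $n$ to the unique successful individual that is its ``closest successful ancestor'' along the backbone, or to partition $\Z$ into the foils (level sets along the bi-infinite path) and use that each successful node on the path accounts, in expectation, for a block of integers of mean size $\E[a_0]$. Concretely, I would define a diagonally invariant transport $h(m,n)$ that routes mass from $m$ to the successful node governing its position on the tree, apply \eqref{themtp}, and read off that the intensity of $\Phi^s$ is the reciprocal of the mean inter-successful spacing along the path, which by the renewal interpretation of the backbone equals $\E[a_0]$. The statement for $\Phi^e$ is then immediate: since $\Phi^e$ and $\Phi^s$ partition $\Z$ (as noted, $\Phi^e\cup\Phi^s=\Z$ with the two sets disjoint), additivity of intensities gives $\lambda^e=1-\lambda^s=1-\frac{1}{\E[a_0]}$.

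The main obstacle I anticipate is making rigorous the claim that the successful individuals along the bi-infinite path form, under their Palm probability, a renewal-like structure with mean spacing exactly $\E[a_0]$, and in particular justifying the exchange between the ``forward'' counting of $N_0$ (edges crossing the origin) and the ``backbone'' intensity. The subtlety is that $N_0$ counts all individuals alive at $0$, not merely successful ones, so equating $\E[N_0]=\E[a_0]$ with $1/\lambda^s$ requires the nontrivial fact that each point of the path contributes exactly one unit of ``alive'' mass on average after the right transport is chosen; I would resolve this by carefully selecting the diagonally invariant $h$ in \eqref{themtp} so that the mass landing on the root equals its count of alive-individuals and the mass it emits equals $\lambda^s^{-1}$, matching the two sides of the mass transport identity.
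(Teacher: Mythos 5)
Your proposal does not contain a completed proof: it floats three different routes and stops, in each case, exactly at the step where the real work lies. The cycle formula relating $\lambda^s$ to $\lambda^o$ times the expected number of successful points per cycle is a correct instance of the exchange formula, but you never evaluate that expectation, and doing so is no easier than the original problem. The Little's-law step is not a valid inference: Proposition \ref{proppopulation1} says $\E[N_0]=\E[a_0]$ where $N_0$ counts \emph{all} individuals alive at $0$, successful or not, and nothing ties this to the intensity of the backbone --- as you yourself concede in your final paragraph. The mass-transport route is the one that can be made rigorous, but you never write down the transport $h$, and the quantity you propose to ``read off'' --- that the mean spacing of $\Phi^s$ under its Palm probability equals $\E[a_0]$ --- is precisely the fact in dispute; justifying it by ``the renewal interpretation of the backbone'' is circular. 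So the proposal, as written, has a genuine gap, and you correctly identified where it is without closing it.

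What is missing is a specific pair of observations. (a) Since the successful individuals are exactly the nodes of the unique bi-infinite path (Theorem \ref{genealogythm}) and $f$ is strictly increasing, the successor of $s\in\Phi^s$ within $\Phi^s$ is $f(s)=s+a_s$: no backbone point can lie strictly between $s$ and $f(s)$, so the intervals $[s,f(s))$, $s\in \Phi^s$, partition $\Z$. (b) The event $\{0\in\Phi^s\}$ is determined by the descendants of $0$, hence by $\{a_n\}_{n<0}$ alone, and is therefore independent of $a_0$; consequently $\E^s[a_0]=\E[a_0]$. With (a) and (b) your argument closes immediately: either by the Palm inverse formula, $\lambda^s=1/\E^s[k^s_1-k^s_0]=1/\E^s[a_0]=1/\E[a_0]$, or by applying \eqref{themtp} to the diagonally invariant function $h(\omega,m,n)=\ind\{n\in\Phi^s(\omega),\ n\le m<f(n)\}$, which gives
\begin{align*}
1=\E\left[\sum_{n\in\Z}h(0,n)\right]=\E\left[\sum_{n\in\Z}h(n,0)\right]=\E\left[\ind\{0\in\Phi^s\}\,a_0\right]=\lambda^s\,\E[a_0].
\end{align*}
Note that this repaired argument is genuinely different from the paper's proof, which sets $u_k=\PP[S_n=k\ \hbox{for some}\ n\ge 0]$ for the renewal sequence with increments distributed as $a_0$, invokes the discrete renewal (Erd\H{o}s--Feller--Pollard) theorem to get $u_k\to 1/\E[a_0]$ (using aperiodicity from $\PP[a_0=1]>0$), and identifies $\lambda^s$ with $\lim_k u_k$; your route, once completed, avoids renewal-theoretic asymptotics entirely, but as submitted it is a plan rather than a proof.
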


\begin{proof}
Let $S_0=0$ and $S_n=a_1+a_2+\cdots+a_n$.  The associated renewal sequence $\{u_k\}_{k\geq 0}$ is defined as:
\begin{align*}
u_k=\PP[S_n=k~\hbox{for some $n\geq 0$}],	
\end{align*}
so $u_k$ is the probability that $k$ is hit at some renewal epoch $S_n$. 
Then from asymptotic renewal theory (see, e.g., \cite{asmussen2003applied}), since the distribution of $\{a_n\}_{n\in \Z}$ is aperiodic (as $\PP[a_0=1]>0$), $u_k\to \frac{1}{\E[a_0]}$ as $k\to \infty$, $\PP-$a.s. As $\lim_{k\to \infty} u_k=\PP[0\in \Psi^s]=\lambda^o$ and $\lambda^s+\lambda^e=1$, the result follows.  
\end{proof}

\begin{rem}
As $\prod_{i=1}^{\infty}\PP[a_0\leq i]\le \frac{1}{\E[a_0]}$, $\lambda^o\le \lambda^s$, as expected. 
\end{rem}

We know from \cite{BOAunimodular} that $\E[d_n(0)]=1$, i.e., the expected number of descendants of all degrees of a typical integer is one. This result follows from the mass transport principle. The process of successful individuals is locally supercritical, while the process of ephemeral individuals is locally subcritical, as the next proposition shows. 

\begin{prop}
Assume $\PP[a_0=1]\in (0,1)$. Then, for all $n\geq 1$, $\E^s[d_n(0)]>1$, while $\E^e[d_n(0)]<1$. 
\end{prop}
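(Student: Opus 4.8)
The plan is to compute $\E^s[d_n(0)]$ and $\E^e[d_n(0)]$ by relating these conditional (Palm) expectations to the unconditional expectation $\E[d_n(0)]=1$ via the law of total expectation. Since $\Phi^s$ and $\Phi^e$ partition $\Z$, every integer is either successful or ephemeral, and one has the decomposition
\begin{align*}
\E[d_n(0)]=\lambda^s\,\E^s[d_n(0)]+\lambda^e\,\E^e[d_n(0)]=1,
\end{align*}
which follows from the Palm calculus identity $\PP=\lambda^s\PP_{\Phi^s}+\lambda^e\PP_{\Phi^e}$ when the event conditioned on is ``$0$ is of one type or the other'' (here I use $\lambda^s+\lambda^e=1$ and $\lambda^s=1/\E[a_0]$ from the preceding proposition). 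Because this is a convex combination equal to $1$, it suffices to prove the single strict inequality $\E^s[d_n(0)]>1$; the companion inequality $\E^e[d_n(0)]<1$ is then forced, since if both conditional means were $\ge 1$ (resp. both $\le 1$) the weighted average could not equal $1$ unless both equal $1$, which I will rule out.

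First I would establish the base case $n=1$ and then propagate to general $n\ge 1$. The cleanest route is to condition on whether $0$ is successful: a successful node lies on the bi-infinite path and has an infinite descendant tree, so intuitively it should have strictly more children on average than a typical node, whose offspring mean is exactly $1$ by criticality. Concretely, I would argue that the event $\{0\in\Phi^s\}$ is positively correlated with having descendants: conditioning on $0$ being successful biases the number of preimages $d_1(0)=\#f^{-1}(0)$ upward, since at least one child of a successful node must itself be successful (to carry the infinite lineage), whereas an ephemeral node's entire offspring is finite. For the inductive step from $d_1$ to $d_n$, I would use the covariant/unimodular structure: $d_{n}(0)=\sum_{m\in D_1(0)} d_{n-1}(m)$ restricted appropriately, and track how the successful/ephemeral type of $0$ propagates to its children, exploiting that a successful root always has exactly one successful child on the bi-infinite path while all its other children are ephemeral.

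The strict inequality, rather than the mere inequality, is the technically delicate point and requires the hypothesis $\PP[a_0=1]\in(0,1)$. With $\PP[a_0=1]>0$ there is genuine branching (ephemeral subtrees of positive size occur), and with $\PP[a_0=1]<1$ successful nodes are not forced to be degenerate, so the two classes have genuinely different offspring statistics; I would verify that under $(0,1)$ the inequality cannot collapse to equality by exhibiting an event of positive $\PP_{\Phi^s}$-probability on which $d_n(0)$ strictly exceeds its unconditional typical value.

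The main obstacle I anticipate is making the conditioning rigorous: the events $\{0\in\Phi^s\}$ and $\{0\in\Phi^e\}$ are determined by the \emph{entire} future trajectory $\{a_m\}_{m\ge 0}$ (whether the lineage is infinite), so they are not finitely-determined and are correlated with $d_n(0)$ in a nontrivial way. Establishing the strict, rather than weak, inequality will require a careful argument that the successful class carries strictly positive extra offspring mass — most likely by decomposing $\E[d_n(0)]=1$ via the mass transport principle and isolating the contribution of the unique bi-infinite path, along which each successful node contributes one guaranteed successful child on top of its critical-in-expectation ephemeral offspring. Verifying that this extra contribution is strictly positive on a set of positive Palm measure, and that it does not exactly cancel against a deficit elsewhere, is where the real work lies.
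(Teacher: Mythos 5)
You follow the paper's skeleton exactly: the law of total probability gives $\lambda^s\E^s[d_n(0)]+\lambda^e\E^e[d_n(0)]=\E[d_n(0)]=1$, and since a successful individual has a successful descendant of every degree along the bi-infinite path, $d_n(0)\ge 1$ holds $\PP^s$-a.s., so $\E^s[d_n(0)]\ge 1$ and hence $\E^e[d_n(0)]\le 1$. (Note the role of $\PP[a_0=1]<1$ here is simply that $\E[a_0]>1$, so $\lambda^e=1-1/\E[a_0]>0$; it is not about successful nodes being ``non-degenerate''.) But this yields only the weak inequalities, while the proposition asserts strict ones. The strictness step --- ruling out $\E^s[d_n(0)]=\E^e[d_n(0)]=1$ --- is exactly what you defer (``which I will rule out'', ``where the real work lies'') and never supply, so as written your proposal proves $\E^s[d_n(0)]\ge 1\ge \E^e[d_n(0)]$ and nothing more. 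This is the genuine gap. Your proposed induction on $n$ is also both unnecessary and not carried out: the bi-infinite path gives the degree-$n$ successful descendant for every $n$ simultaneously, so no propagation from $n=1$ is needed.

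To close the gap: suppose $\E^s[d_n(0)]=1$. Since $d_n(0)\ge 1$ $\PP^s$-a.s., this forces $d_n(0)=1$ $\PP^s$-a.s.; by stationarity and a countable union over $\Z$, a.s.\ no successful node has an ephemeral descendant of degree $n$, which is equivalent to saying that for every ephemeral $m$ the ancestor $f^n(m)$ is again ephemeral. Iterating, $f^{kn}(m)$ is ephemeral for all $k\ge 0$. On the other hand, every integer $m$ is a descendant of each of the a.s.\ infinitely many original ancestors to its right (Theorem \ref{originalancestorsthm}), so $f^j(m)\in\Phi^s$ for some finite $j$, and then $f^{j'}(m)\in\Phi^s$ for all $j'\ge j$ because an ancestor of a successful node is successful; taking $k$ with $kn\ge j$ gives a contradiction as soon as ephemeral nodes exist, which they do a.s.\ since $\lambda^e>0$. (Equivalently, in mass-transport terms: a.s.\ some ephemeral node has a successful parent, namely the last ephemeral node on the path from any ephemeral node to its first successful ancestor, so by stationarity $\PP[0\in\Phi^e,\,f^n(0)\in\Phi^s]\ge\PP[0\in\Phi^e,\,f(0)\in\Phi^s]>0$; applying (\ref{themtp}) to $h(m,k)=\ind\{f^n(m)=k\}\ind\{k\in\Phi^s\}$ then gives $\lambda^s\E^s[d_n(0)]=\PP[f^n(0)\in\Phi^s]=\lambda^s+\PP[0\in\Phi^e,\,f^n(0)\in\Phi^s]>\lambda^s$.) Some argument of this kind is what the paper's terse appeal to ``a successful has at least one descendant of degree $n$ a.s.'' combined with $\PP[a_0=1]<1$ is standing in for, and it is the piece your proposal identifies but does not provide.
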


\begin{proof}
By the law of total probability and the definition of $\PP^s$ and $\PP^e$, 
\begin{align*}
\E^s[d_n(0)]\lambda^s+\E^e[d_n(0)]\lambda^e=\E[d_n(0)]=1.
\end{align*}
Since a successful has at least one descendant of degree $n$ $\hbox{a.s.}$, $\PP[a_0=1]<1$, and $\lambda^e=1-\lambda^s$,
$\E^s[d_n(0)]>1$ and  $\E^e[d_n(0)]<1$. 
\end{proof}

\subsection{Cousins and direct ephemeral descendants}
\label{directephemeralsubsubsection}

Given a successful node $n$, we say that an ephemeral individual $m$  is a {\em direct ephemeral descendant} of $n$ if $n$ is the first successful in the ancestry lineage of $m$. The set of direct ephemeral descendants of $n$ is hence:
\begin{align*}
D^e(n)=\{m\in D(n)\cap \Phi^e:\hbox{for the smallest $k>0$ s.t. $f^k(m)\in \Phi^s$, $f^k(m)=n$}\}, 
\end{align*} 
where $D(n)$ is the set of descendants of all degrees of $n$ (see Equation (\ref{descendants})).  By Theorem \ref{genealogythm}, the cardinality of $D^e(n)$, denoted by $d^e(n)$, is finite. Moreover, for $m\neq n\in \Psi^s$, $D^e(n)\cap D^e(m)=\emptyset$.

\begin{defi}[Direct ephemeral descendants partition] 
Let $$\tilde{D}^e(n):=D^e(n)\cup \{n\}$$ be the directed ephemeral tree rooted at $n\in \Phi^s$.
The {\em direct ephemeral descendant partition} is \begin{align}\mathcal{P}^D:=\{\tilde{D}^e(n):{n\in \Phi^s}\}.\end{align} 
\end{defi}

We notice that, by convention, any individual $n$ is a cousin of itself (i.e., $n$ is a $0-$degree cousin of itself). Moreover, if $m\neq n$ both belong to $\Psi^s$, then $L(n)\cap L(m)=\emptyset$, as either $m$ is a descendant or an ancestor of $n$. In other words, for $n\in \Phi^s$, $L(n)\backslash \{n\}\subset \Phi^e$. Hence, we get the following partition.  

\begin{defi}[Successful cousin partition]
The cousin partition of $\Z$ is \begin{align}\mathcal{P}^L:=\{L(n):{n\in \Phi^s}\}.\end{align}
\end{defi}

\begin{figure}
\center
      \includegraphics[width=1\textwidth]{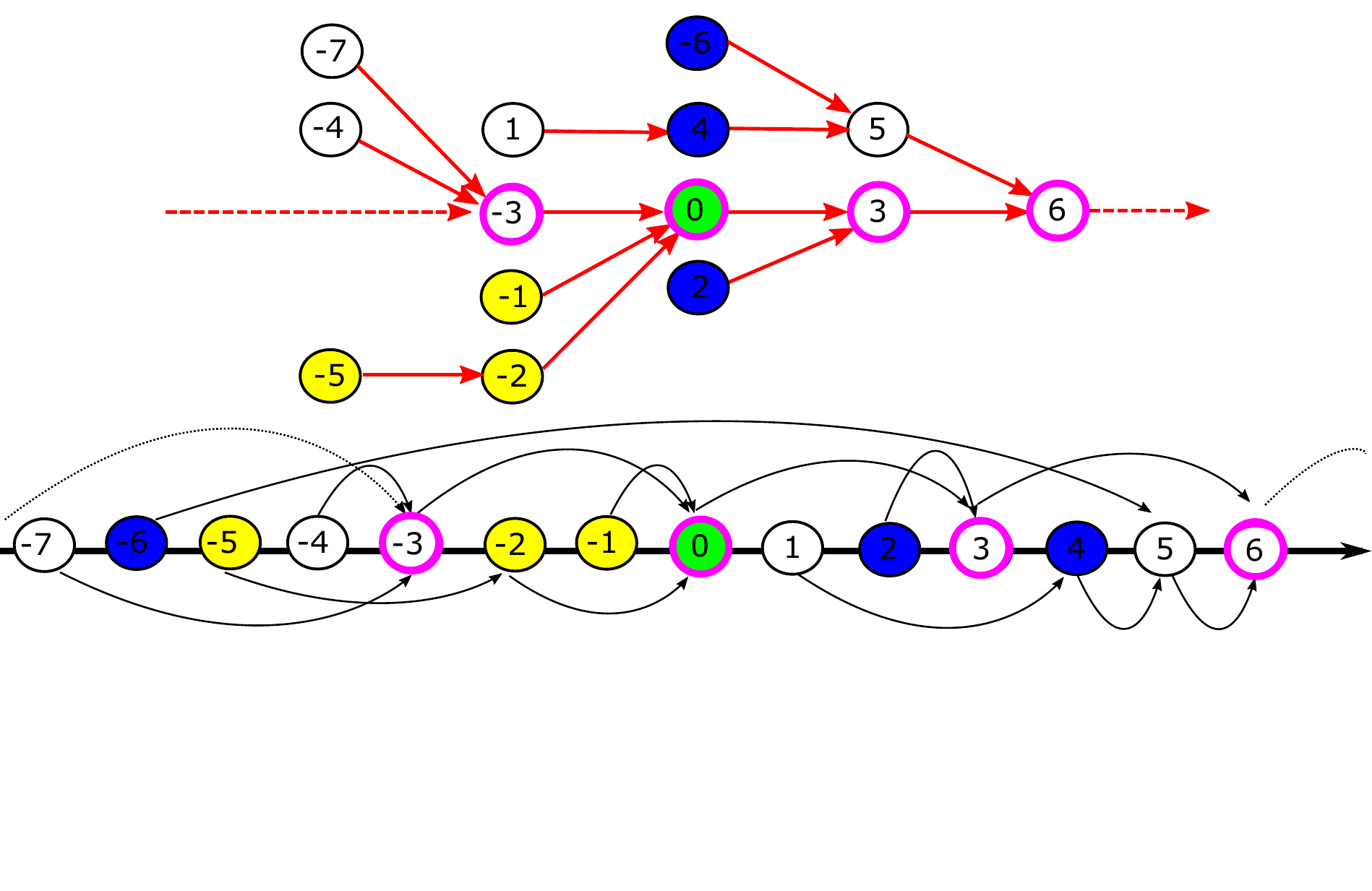}
  \caption{\textbf{The direct ephemeral descendants and the cousins of 0.} Above we have a realization of $T^f$ and below the corresponding integers on $\Z$. Here, -3, 0, 3, and 6  belong to the bi-infinite path (denoted by circles with pink boundaries). The yellow individuals are the direct ephemeral descendants of $0$, while the blue ones are cousins. Individual $0$ has two ephemeral children (nodes -1 and -2), one successful (node -3), and one ephemeral grandchild (node -5). It has a first degree cousin (node 2) and two second degree cousins (nodes -6 and 4). While any descendant of $0$ must be to the left of it on $\Z$, cousins can be either to the left or right. }
\end{figure}

Figure 3 illustrates $\tilde{D}^e(0)$ and $L(0)$.  For all $n\in \Psi^e$ and $j>0$, let $d^e_j(n)=\#\{D^e(n)\cap D_j(n)\}$ be the number of directed ephemeral descendants of degree $j$ of $n$. By construction, the following equality holds for all $j>0$ $\PP^s-\hbox{a.s.}$ (see Figure 4):
\begin{align}
\label{cousintreerelationship1}
l_j(0)&=d^e_j(k^s_j),
\end{align}
so that
\begin{align}
\label{cousintreerelationship2}
l(0)&=\sum_{j=1}^{\infty}d^e_j(k^s_j)+1.
\end{align}

\begin{figure}
\center
      \includegraphics[width=0.7\textwidth]{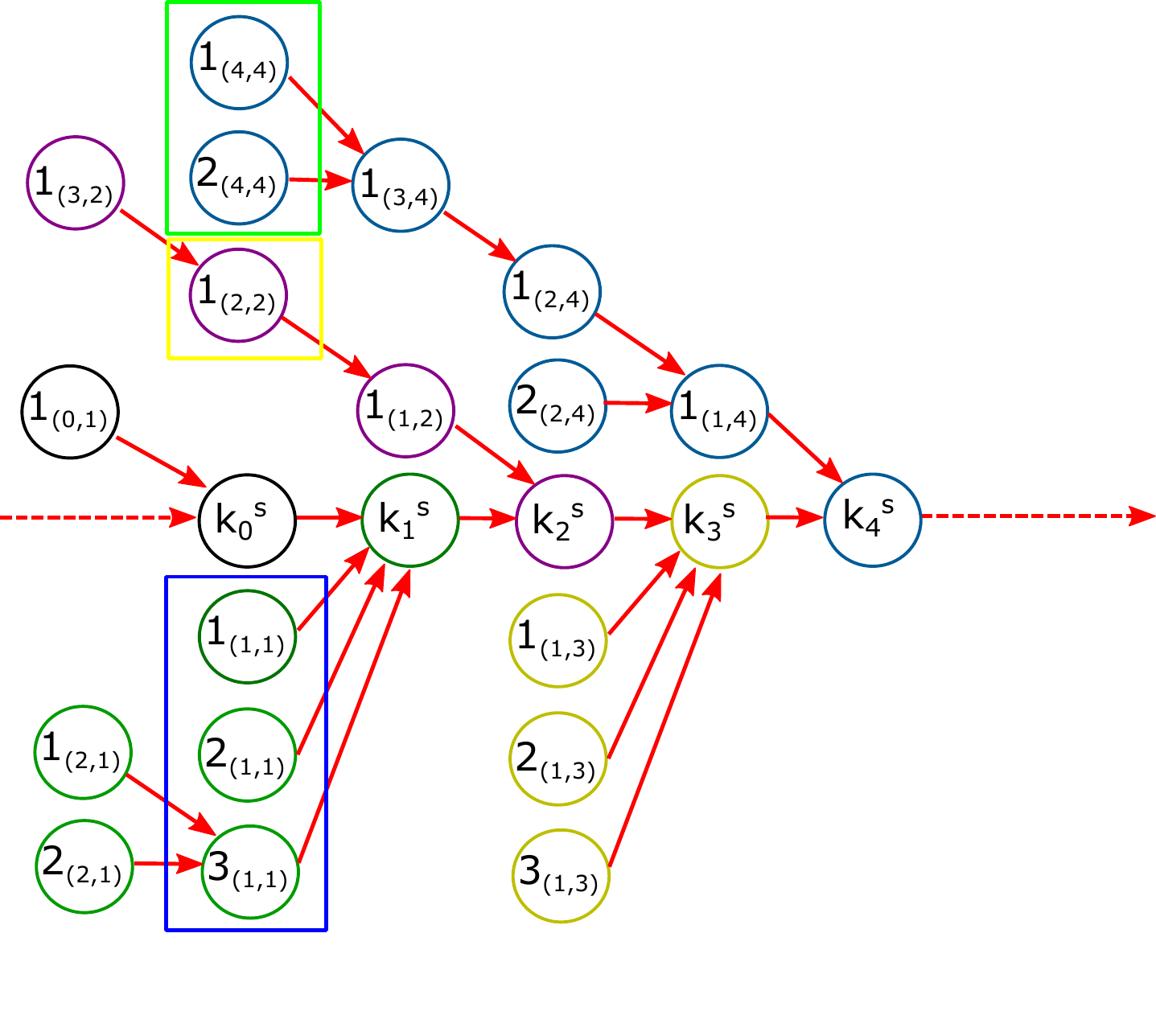}
  \caption{\textbf{Cousins and the direct ephemeral descendants trees:} The integers $\{k^s_i\}_{i=1}^{4}$ represent the successful individuals. The color of the boundary of the circles indicates the direct ephemeral descendant tree an individual belongs to. For example, all individuals represented by blue boundary circles belong to the direct ephemeral descendant tree of $k^s_4$. The notation $I_{(n,m)}$ reads ``individual $I$ of the $n^{th}$ layer of the direct ephemeral descendant tree of $k^s_m$''. For example, $2_{(2,4)}$ is the second individual in the second layer of the direct ephemeral descendant tree of $k^s_4$. Each colored box contains the cousins of $k^s_0$ of different degrees. The blue box contains the first-degree, the yellow box contains the second-degree, and the green box contains the forth-degree cousins (there are no third-degree cousins). Equivalently, the blue box contains all elements of the first layer of the direct ephemeral descendant tree of $k^s_1$, the yellow box contains all elements of the second layer of the direct ephemeral descendant tree of $k^s_2$, and the green box contains all elements of the forth layer of the direct ephemeral descendant tree of $k^s_4$. As the direct ephemeral descendants tree of $k^s_3$ has no third layer, $k^s_0$ has no third-degree cousins. }
\end{figure}

\begin{prop}
\label{immortalprop1}
 For all $j\geq 1$ and $q\ge 0$, $\PP^s[d^e_j(0)=q]=\PP^s[l_j(0)=q]$.  
\end{prop}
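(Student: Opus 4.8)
The plan is to reduce the statement to the structural identity (\ref{cousintreerelationship1}) and then move from the point $k^s_j$ back to the origin using the point-stationarity of $\Phi^s$. By (\ref{cousintreerelationship1}), the equality $l_j(0)=d^e_j(k^s_j)$ holds $\PP^s$-a.s., so $\PP^s[l_j(0)=q]=\PP^s[d^e_j(k^s_j)=q]$ for every $q\ge 0$. Hence it suffices to prove that, under $\PP^s$, the variables $d^e_j(k^s_j)$ and $d^e_j(0)$ have the same law, i.e. $\PP^s[d^e_j(k^s_j)=q]=\PP^s[d^e_j(0)=q]$ for all $q\ge 0$ and $j\ge 1$.

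Next I would record that the entire genealogical structure is translation-covariant. Since $a_n(\omega)=a_0(\theta_n\omega)$, the map $f$, the descendant sets $D_i(\cdot)$, the foils, and the successful/ephemeral classification are all covariant under $\{\theta_n\}_{n\in\Z}$; in particular the integer-valued function $n\mapsto d^e_j(n)$ satisfies $d^e_j(n)(\omega)=d^e_j(0)(\theta_n\omega)$, and it is a.s. finite, being bounded by $d^e(0)<\infty$ (finite by Theorem \ref{genealogythm}). Evaluating this covariance at $n=k^s_j$ gives $d^e_j(k^s_j)(\omega)=d^e_j(0)(\theta_{k^s_j}\omega)$, which re-expresses $d^e_j(k^s_j)$ as the base function $d^e_j(0)$ read on the shifted configuration.

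It then remains to show that $\theta_{k^s_j}$ preserves $\PP^s$. Because $\Phi^s$ is a s.s.p.p. with positive intensity, its Palm probability $\PP^s$ is well defined, and by Lemma \ref{palmpreservation} applied to $\Phi^s$ the shift $\theta_{k^s_1}$ to the next successful point preserves $\PP^s$. Using $\theta_a\theta_b=\theta_{a+b}$ together with the fact that the successful points of $\theta_{k^s_1}\omega$ are those of $\omega$ recentered at $k^s_1$, one checks $\theta_{k^s_j}=(\theta_{k^s_1})^{j}$, so $\theta_{k^s_j}$ preserves $\PP^s$ as well; equivalently, the sequence $i\mapsto d^e_j(k^s_i)$ is stationary under $\PP^s$. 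Consequently $d^e_j(k^s_j)\equid d^e_j(k^s_0)=d^e_j(0)$ under $\PP^s$, and combining this with (\ref{cousintreerelationship1}) yields the proposition. The only step requiring care is this last transfer — verifying that iterating the next-point shift genuinely realizes $\theta_{k^s_j}$ and preserves the Palm measure — as everything else is either covariance bookkeeping or a direct appeal to (\ref{cousintreerelationship1}).
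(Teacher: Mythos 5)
Your proof is correct and takes essentially the same route as the paper's: first reduce via the identity (\ref{cousintreerelationship1}) to $\PP^s[l_j(0)=q]=\PP^s[d^e_j(k^s_j)=q]$, then transfer from $k^s_j$ back to the origin using the fact that $\theta_{k^s_j}$ preserves $\PP^s$. The only difference is that you make explicit what the paper merely asserts, namely the covariance identity $d^e_j(k^s_j)(\omega)=d^e_j(0)(\theta_{k^s_j}\omega)$ and the justification that $\theta_{k^s_j}=(\theta_{k^s_1})^j$ preserves the Palm probability via Lemma \ref{palmpreservation}.
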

\begin{proof}
From (\ref{cousintreerelationship1}), for all $j\geq 1$ and $q\ge 0$,
\begin{align}
\label{immortalprop1eq1}
\PP^s[l_j(0)=q]=\PP^s[d^e_j(k^s_j)=q]. 
\end{align} 
As $\theta_{k^s_j}$ preserves $\PP^s$,
\begin{align}
\label{immortalprop1eq2}
\PP^s[\theta_{k^s_j}\{d^e_j(k^s_j)=q\}]&=\PP^s[d^e_j(0)=q],\quad\forall~q\geq 1. 
\end{align}
We get the result by combining Equations (\ref{immortalprop1eq1}) and (\ref{immortalprop1eq2}).
\end{proof}

\begin{prop}
\label{masstpdirectdescendantsandfoil}
Given $0\in \Phi^e$, let $n^{(d)}$ be the unique random successful individual such that $0\in \tilde{D}^e(n^{(d)})$. In the same way, let $n^{(l)}$ be the unique successful individual such that $0\in L(n^{(l)})$. Then, for any diagonally invariant function $g$ (Definition \ref{translationinvariant}),
\begin{align}
\label{mtpdirectdescendants}
\lambda^s\E^s\left[\sum_{n\in \tilde{D}^e(0)}g(0,n)\right]&=\lambda^s\E^s[g(0,0)]+\lambda^e\E^e[g(n^{(d)},0)]
\end{align}
and
\begin{align}
\label{mtpfoildirectdescendants}
\lambda^s\E^s\left[\sum_{n\in L(0)}g(0,n)\right]&=\lambda^s\E^s[g(0,0)]+\lambda^e\E^e[g(n^{(l)},0)].
\end{align}
\end{prop}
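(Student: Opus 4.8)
The plan is to derive both identities from a single application of the mass transport principle (\ref{themtp}), choosing in each case a diagonally invariant weight that encodes the relevant partition of $\Z$. For the first identity I would define the transport
$$h(m,n) = \ind\{m\in \Phi^s\}\,\ind\{n\in \tilde{D}^e(m)\}\,g(m,n),$$
thinking of $h(m,n)$ as the mass that a successful node $m$ sends to each node of its direct ephemeral tree $\tilde{D}^e(m)$. Since $g$ is diagonally invariant and the labelings $\Phi^s$, $\Phi^e$ together with the descendant sets $\tilde{D}^e(\cdot)$ are all covariant under $\{\theta_k\}$ (because $f$ is a covariant vertex-shift), the product $h$ is again diagonally invariant, so (\ref{themtp}) applies.

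Next I would compute the two sides of the mass transport principle explicitly. The mass out of the root is $\sum_{n} h(0,n) = \ind\{0\in\Phi^s\}\sum_{n\in\tilde{D}^e(0)}g(0,n)$. For the mass into the root, $\sum_m h(m,0)$, I use that $\mathcal{P}^D=\{\tilde{D}^e(n):n\in\Phi^s\}$ is a partition of $\Z$ in which every block contains exactly one successful node (its root) and otherwise only ephemerals; hence there is a \emph{unique} $m\in\Phi^s$ with $0\in\tilde{D}^e(m)$. If $0\in\Phi^s$ this is $m=0$, contributing $g(0,0)$; if $0\in\Phi^e$ it is $m=n^{(d)}$, contributing $g(n^{(d)},0)$. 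Equating expectations gives
$$\E\!\left[\ind\{0\in\Phi^s\}\sum_{n\in\tilde{D}^e(0)}g(0,n)\right] = \E[g(0,0)\ind\{0\in\Phi^s\}] + \E[g(n^{(d)},0)\ind\{0\in\Phi^e\}],$$
and converting each ordinary expectation to a Palm expectation via $\E[X\ind\{0\in\Phi^s\}]=\lambda^s\E^s[X]$ and $\E[X\ind\{0\in\Phi^e\}]=\lambda^e\E^e[X]$ yields (\ref{mtpdirectdescendants}). The second identity (\ref{mtpfoildirectdescendants}) is proved identically, replacing $\tilde{D}^e(\cdot)$ by the foil $L(\cdot)$ and $\mathcal{P}^D$ by the cousin partition $\mathcal{P}^L$; here the key structural input is that, for $n\in\Phi^s$, $L(n)\setminus\{n\}\subset\Phi^e$, so again each block of $\mathcal{P}^L$ contains a single successful node.

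The main obstacle I anticipate is justifying the two partition facts that make ``mass received at $0$'' collapse to a single term, rather than the routine bookkeeping. For $\mathcal{P}^D$ one must argue that every integer has a well-defined first successful ancestor, which relies on Theorem \ref{genealogythm} (the component is of class \textbf{I}/\textbf{F}, so each ephemeral lineage reaches the bi-infinite path in finitely many steps). For $\mathcal{P}^L$ one must argue that each foil meets the bi-infinite path in exactly one point, so that the successful nodes form a system of representatives for the foils; this is the statement that $n^{(l)}$ is well defined, already recorded in the discussion preceding the proposition. A secondary technical point is the careful verification that multiplying the diagonally invariant $g$ by the covariant indicators preserves diagonal invariance in the precise sense of Definition \ref{translationinvariant}, which I would check directly from the covariance $\Phi^s(\theta_k\omega)=\Phi^s(\omega)-k$ and the analogous identities for $\tilde{D}^e$ and $L$.
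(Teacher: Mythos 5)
Your proposal is correct and follows essentially the same route as the paper: the paper's proof applies the mass transport principle (\ref{themtp}) to exactly the transport functions you define, namely $h_1(\omega,0,n)=\ind\{0\in\Phi^s\}\ind\{n\in\tilde{D}^e(0)\}g(\omega,0,n)$ and its foil analogue, and reads off the two identities. The paper leaves implicit the bookkeeping you spell out — the partition facts guaranteeing a unique successful representative per block, and the conversion of $\E[\,\cdot\,\ind\{0\in\Phi^s\}]$ and $\E[\,\cdot\,\ind\{0\in\Phi^e\}]$ into Palm expectations — so your write-up is simply a more detailed version of the same argument.
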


\begin{proof}
Equation (\ref{mtpdirectdescendants}) follows from applying the mass transport principle to the function
\begin{align*}
h_1(\omega,0,n):=\textbf{1}\{0\in \Phi^s(\omega)\}\textbf{1}\{n\in \tilde{D}^e(0)(\omega)\}g(\omega,0,n),
\end{align*}
while (\ref{mtpfoildirectdescendants}) follows from applying the mass transport principle to the function
\begin{align*}
h_2(\omega,0,n):=\textbf{1}\{0\in \Phi^s(\omega)\}\textbf{1}\{n\in L(0)(\omega)\}g(\omega,0,n).
\end{align*}
\end{proof}

\begin{cor}
\label{cormtp}
The following holds:
\begin{align}
\label{mtpsamemean}
\E^s[\tilde{d}^e(0)]=\E^s[l(0)]=\E[a_0],
\end{align} where $\tilde{d}^e(0)$ is the cardinality of $\tilde{D}^e(0)$. Moreover,
\begin{align}
\label{mtpmean2eq}
\frac{\E^s\left[\sum_{n\in \tilde{D}^e(0)}|n|\right]}{\E^e[|n^{(d)}|]}=\frac{\E^s\left[\sum_{n\in L(0)}|n|\right]}{\E^e[|n^{(l)}|]}=\E[a_0]-1.
\end{align}
\end{cor}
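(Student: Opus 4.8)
The plan is to derive both displayed identities as direct specializations of the two mass transport formulas established in Proposition \ref{masstpdirectdescendantsandfoil}, by inserting two carefully chosen diagonally invariant test functions $g$. For the cardinality identity (\ref{mtpsamemean}) I would use the constant function $g\equiv 1$, and for the ratio identity (\ref{mtpmean2eq}) I would use the translation-difference $g(\omega,m,n)=|n-m|$. Both choices are diagonally invariant in the sense of Definition \ref{translationinvariant}: the constant function trivially satisfies $g(\theta_k\omega,m,n)=g(\omega,m+k,n+k)$, and for $|n-m|$ one has $|(n+k)-(m+k)|=|n-m|$ with no dependence on $\omega$.

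For (\ref{mtpsamemean}), substituting $g\equiv 1$ into (\ref{mtpdirectdescendants}) turns the left-hand sum into the cardinality $\tilde{d}^e(0)$, makes the term $g(0,0)$ equal to $1$, and collapses the right-hand side to $\lambda^s+\lambda^e$. Since $\Phi^e\cup\Phi^s=\Z$ we have $\lambda^s+\lambda^e=1$, so $\lambda^s\E^s[\tilde{d}^e(0)]=1$. Dividing by $\lambda^s=1/\E[a_0]$ from (\ref{lambdas}) gives $\E^s[\tilde{d}^e(0)]=\E[a_0]$. Repeating the same substitution in (\ref{mtpfoildirectdescendants}), where the left-hand sum now counts $l(0)$, yields $\E^s[l(0)]=\E[a_0]$ by the identical computation.

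For (\ref{mtpmean2eq}), substituting $g(\omega,m,n)=|n-m|$ into (\ref{mtpdirectdescendants}) gives $g(0,n)=|n|$, $g(0,0)=0$, and $g(n^{(d)},0)=|n^{(d)}|$, so the diagonal term drops out and the identity reduces to $\lambda^s\E^s[\sum_{n\in\tilde{D}^e(0)}|n|]=\lambda^e\E^e[|n^{(d)}|]$. Rearranging exhibits the first ratio as $\lambda^e/\lambda^s$. The foil version follows verbatim from (\ref{mtpfoildirectdescendants}), with $g(n^{(l)},0)=|n^{(l)}|$, giving the second ratio as $\lambda^e/\lambda^s$ as well. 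It then remains only to evaluate $\lambda^e/\lambda^s=(1-1/\E[a_0])/(1/\E[a_0])=\E[a_0]-1$, using $\lambda^s=1/\E[a_0]$ and $\lambda^e=1-\lambda^s$.

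The argument is essentially bookkeeping once the two test functions are identified, so the only genuine choice to be made, and what I regard as the crux, is recognizing that the right statistic to feed into the mass transport principle is the \emph{absolute difference} $|n-m|$ rather than $n$ or $n-m$ on its own. The raw position $n$ is not diagonally invariant (it fails the shift relation of Definition \ref{translationinvariant}), so one cannot transport $|n|$ directly; using the diagonally invariant $|n-m|$ is precisely what makes the root-to-target distance appear on the left and the target-to-root distance on the right, while the on-diagonal contribution $g(0,0)$ conveniently vanishes. After that, the only remaining inputs are $\lambda^s+\lambda^e=1$ and the explicit value $\lambda^s=1/\E[a_0]$, both already available.
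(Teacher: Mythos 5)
Your proposal is correct and follows essentially the same route as the paper: both identities are obtained by substituting $g\equiv 1$ and a distance-type function into Equations (\ref{mtpdirectdescendants}) and (\ref{mtpfoildirectdescendants}) of Proposition \ref{masstpdirectdescendantsandfoil}, then using $\lambda^s+\lambda^e=1$ and $\lambda^s=1/\E[a_0]$. Your explicit insistence on the diagonally invariant form $g(\omega,m,n)=|n-m|$ is a welcome clarification of the paper's shorthand ``$g(0,n)=|n|$'', but it is the same argument.
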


\begin{proof}
The results follow from choosing particular $g(0,n)$ in Proposition \ref{masstpdirectdescendantsandfoil}.
Let $g(0,n)\equiv 1$. Then, Equation (\ref{mtpsamemean}) holds as $\lambda^s+\lambda^e=1$ and $\lambda^s=\frac{1}{\E[a_0]}.$
\item Set $g(0,n)=|n|$. Again, using Equation (\ref{mtpdirectdescendants})
\begin{align*}\
\lambda^s\E^s\left[\sum_{n\in \tilde{D}^e(0)}|n|\right]&=\lambda^s\E^s[0]+\lambda^e\E^e[|n^{(d)}|]=\lambda^e\E^e[|n^{(d)}|].
\end{align*}
Hence, 
\begin{align*}
\E^e[|n|^{(d)}]&=\left(\frac{\E[a_0]}{\E[a_0]-1}\right)\frac{1}{\E[a_0]}\E^s\left[\sum_{n\in \tilde{D}^e(0)}|n|\right]\\
&=\frac{\E^s\left[\sum_{n\in \tilde{D}^e(0)}|n|\right]}{\E[a_0]-1}.
\end{align*}
Following the same steps using (\ref{mtpfoildirectdescendants}), we recover Equation (\ref{mtpmean2eq}). 
\end{proof} 

\section{Final remarks}

Here are a few comments on the population dynamics interpretation
of the model discussed here. Our model is concerned with a critical 
population dynamics. In branching processes, the critical case leads to extinction unless there is no variability at all. In contrast, in our model, there is no extinction, although the population is infinitely often close to extinction, as the original ancestor point 
process shows. In connection with this, we find it interesting to mention that there 
is some genetic and archaeological evidence that the human population was close to extinction several times 
in the distant past. (\cite{article2} and \cite{article1}).

\appendix

\section{Proofs: Section \ref{populationdynamics}}
\label{someproofsa}

\begin{prop}
\label{propositionregardingrenewal}
The ${s.s.p.p.}$ of original ancestors constructed in Theorem \ref{originalancestorsthm} is a renewal process. 
\end{prop}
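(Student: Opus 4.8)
The plan is to show that, under $\PP_{\Psi^o}$, the increment sequence $\{k^o_n - k^o_{n-1}\}_{n\in\Z}$ is $\iid$. The crux is a reduction showing that, once we sit at an original ancestor, the location of the \emph{next} original ancestor to the right is a measurable function of the future marks alone. Working under $\PP_{\Psi^o}$ we have $k^o_0 = 0$, i.e.\ $N_0 = 1$, which by (\ref{individualsalive}) means $a_{-i}\le i$ for all $i\ge 1$. First I would observe that, for any $m\ge 1$, the constraints defining $\{N_m = 1\}$ that involve negatively indexed marks are then automatic: for $i > m$ one has $a_{m-i} = a_{-(i-m)} \le i - m \le i$. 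Consequently, given $0\in\Psi^o$,
\begin{align*}
N_m = 1 \iff a_j \le m - j \text{ for all } 0 \le j \le m-1,
\end{align*}
so that $k^o_1 = \min\{m\ge 1 : a_j \le m-j,\ 0\le j\le m-1\}$ depends only on $\{a_j\}_{j\ge 0}$, and is finite $\PP_{\Psi^o}$-a.s.\ because $\Psi^o$ has infinitely many points (Theorem \ref{originalancestorsthm}).

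Next I would set up a regeneration argument. Let $\mathcal{F}_m = \sigma(a_0,\dots,a_{m-1})$ and $g(b_0,b_1,\dots) := \min\{m\ge 1 : b_j \le m-j,\ 0\le j\le m-1\}$, so that $k^o_1 = g(\{a_j\}_{j\ge 0})$ and, applying the same rule at each successive ancestor, $k^o_{n+1} = k^o_n + g(\{a_{k^o_n + j}\}_{j\ge 0})$. A short check shows each $k^o_n$ is a stopping time for $(\mathcal{F}_m)$, since $\{k^o_{n+1} = m\}$ decomposes over the value $m' < m$ of $k^o_n$ and the event $\{g(\{a_{m'+j}\}_{j\ge 0}) = m - m'\}$, the former lying in $\mathcal{F}_{m'}\subseteq\mathcal{F}_m$ and the latter depending only on $a_{m'},\dots,a_{m-1}$. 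Since $\{N_0=1\}$ is an event in the past marks $\{a_{-i}\}_{i\ge 1}$, conditioning on it leaves $\{a_j\}_{j\ge 0}$ $\iid$ with the law of $a_0$ (this is also contained in Lemma \ref{independencefrompastfuture}). The strong Markov property for $\iid$ sequences then gives that $\{a_{k^o_n + j}\}_{j\ge 0}$ is independent of $\mathcal{F}_{k^o_n}$ and distributed as $\{a_j\}_{j\ge 0}$; hence each increment $k^o_{n+1}-k^o_n$ is independent of $(k^o_1,\dots,k^o_n)$ and identically distributed. This proves $\{k^o_n - k^o_{n-1}\}_{n\ge 1}$ is $\iid$ under $\PP_{\Psi^o}$.

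Finally I would upgrade this one-sided statement to the two-sided $\iid$ property required by the definition of an integer-valued renewal process. By Lemma \ref{palmpreservation}, $\theta_{k^o_1}$ preserves $\PP_{\Psi^o}$ and shifts the increment sequence by one index, so $\{k^o_n - k^o_{n-1}\}_{n\in\Z}$ is stationary under $\PP_{\Psi^o}$. For a stationary sequence whose restriction to indices $n\ge 1$ is $\iid$, any finite sub-family can be translated into $\{1,2,\dots\}$ without changing its joint law, whence the entire bi-infinite sequence is $\iid$; this is exactly the renewal property. I expect the main obstacle to be the reduction in the first paragraph, namely verifying that conditioning on $0\in\Psi^o$ renders the negative-index constraints redundant and thereby localizes the next regeneration to the future, together with the check that the successive $k^o_n$ are genuine stopping times so that the strong Markov argument is legitimate; the two-sided extension is then routine.
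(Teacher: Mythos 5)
Your proof is correct, but it takes a genuinely different route from the paper's. The paper proves identical distribution of the increments directly from Lemma \ref{palmpreservation}, and proves independence by taking an arbitrary finite family of increments, shifting indices so that the last one becomes $k^o_1$, and then peeling increments off one at a time: under $\PP_{\Psi^o}$, $k^o_1$ is a function of $\{a_n\}_{n\geq 0}$ while the earlier (nonpositively indexed) increments are functions of $\{a_n\}_{n<0}$, so Lemma \ref{independencefrompastfuture} factors out the last term, and the argument iterates. You instead work forward in time: you give an explicit first-passage representation $k^o_{n+1}-k^o_n=g(\{a_{k^o_n+j}\}_{j\geq 0})$ --- including the verification, which the paper leaves implicit, that given $N_0=1$ the constraints coming from negatively indexed marks are automatically satisfied, so that the next original ancestor is indeed localized in the future marks --- then check that the $k^o_n$ are stopping times and invoke the strong Markov property of the $\iid$ future marks to obtain one-sided independence in a single step; finally you upgrade to the bi-infinite statement via stationarity of the increment sequence (Lemma \ref{palmpreservation} again). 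Both arguments rest on the same two structural facts, namely Palm shift-invariance and the past/future split of the increments under the Palm conditioning, but your version buys an explicit formula for the increment law (as a first-passage functional of the marks) and a one-shot independence argument, at the price of stopping-time machinery; the paper's version stays entirely inside the Palm calculus of its two lemmas and handles arbitrary two-sided families of increments directly, with no separate symmetrization step.
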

\begin{proof}
Notice that, for all $m$, 
\begin{align*}
\PP_{\Psi^o}[(k^o_1-k^o_0)=m]=\PP[k^o_1=m|k^o_0=0]=:y_m.
\end{align*}
Now, for any $n\in \Z$, as $\theta_{k_n}$ preserves $\PP_{\Psi^o}$,
\begin{align*}
\PP_{\Psi^o}[(k^o_{n+1}-k^o_n)=m]&=\PP_{\Psi^o}[(k^o_{n+1}-k^o_n)\circ \theta_{k^o_n}=m]\\
&=\PP_{\Psi^o}[(k^o_1-k^o_0)=m]=y_m.
\end{align*}
Hence $\{k^o_{n+1}-k^o_n\}_{n\in \Z}$ is identically distributed. 

Now let $i_0<i_1<i_2<\cdots<i_q$ be a finite set of integers. Then, for $m_0,\ldots,m_q\in \bbN$, 
\begin{align*}
&\PP_{\Psi^o}[(k^o_{i_0+1}-k^o_{i_0})=m_0,(k^o_{i_2+1}-k^o_{i_2})=m_1,\ldots,(k^o_{i_q+1}-k^o_{i_q})=m_q]\\
&=\PP_{\Psi^o}[(k^o_{i_0-i_q+1}-k^o_{i_0-i_q})=m_0,(k^o_{i_1-i_q+1}-k^o_{i_1-i_q})=m_1,\ldots,k^o_{1}=m_q].
\end{align*}
Now, under $\PP_{\Psi^o}$ the realization of $k^o_{1}$ is a function of $\{a_n\}_{n\geq 0}$, and for all $i<0$, $(k^o_{i}-k^o_{i-1})$ is a function of $\{a_n\}_{n<0}$. Hence, as $\{a_n\}_{n\geq 0}$ is independent of $\{a_n\}_{n<0}$ (Lemma \ref{independencefrompastfuture}), 
\begin{align*}
&\PP_{\Psi^o}[(k^o_{i_0-i_q+1}-k^o_{i_0-i_q})=m_0,(k^o_{i_1-i_q+1}-k^o_{i_1-i_q})=m_1,\ldots,k^o_{1}=m_q]\\
&=\PP_{\Psi^o}\left[(k^o_{i_0-i_q+1}-k^o_{i_0-i_q})=m_0,(k^o_{i_1-i_q+1}-k^o_{i_1-i_q})=m_1,\ldots,\right.\\
&\left.(k^o_{i_{q-1}-i_q+1}-k^o_{i_{q-1}-i_q})=m_{q-1}\right]\PP_{\Psi^o}[k^o_{1}=m_q]\\
&=\PP_{\Psi^o}[(k^o_{i_0+1}-k^o_{i_0})=m_0,(k^o_{i_2+1}-k^o_{i_2})=m_1,\ldots,(k^o_{i_{q-1}+1}-k^o_{i_{q-1}})=m_{q-1}]\\
&\times \PP_{\Psi^o}[(k^o_{i_q+1}-k^o_{i_q})=m_q].
\end{align*}
Keep proceeding in the same way, we conclude that
\begin{align*}
&\PP_{\Psi^o}[(k^o_{i_0+1}-k^o_{i_0})=m_0,(k^o_{i_2+1}-k^o_{i_2})=m_1,\ldots,(k^o_{i_q+1}-k^o_{i_q})=m_q]\\
&=\prod_{j=0}^q\PP_{\Psi^o}[(k^o_{i_j+1}-k^o_{i_j})=m_j].
\end{align*}
Therefore, $\{k^o_{n+1}-k^o_n\}_{n\in \Z}$ is also an independent sequence. 
\end{proof}

\begin{proof}[Proof of Proposition \ref{proppopulation1}]
\label{proofofmomentproposition}
Let $Z_{n,m}=\textbf{1}\{f(m)>n\}$. Then, notice that
\begin{align}
\label{populationequation1}
N_n=\sum_{m<n}^{\infty}Z_{n,m} +1.
\end{align}
For $m\leq 0$, let $M_m=\sum_{m\leq l<0}^{\infty}Z_{0,l}+1$,  so $\lim_{m\to -\infty} M_m=N_0$ $\hbox{a.s.}$ Then, using $a_m\overset{(d)}{=}a_0$ for all $m\in \Z$, 
\begin{align*}
\E[M_m]&=\sum_{m\leq l<0}\PP[a_{-l}>l]+1\\
&= \sum_{l=1}^m\PP[a_{0}> l]+\sum_{l=1}^{m}\PP[a_0=l]+\sum_{l>m}\PP[a_0=l]\\
&= \sum_{l=1}^m\PP[a_{0}\geq l]+\sum_{l>m}\PP[a_0=l].
\end{align*}
Letting $m\to \infty$, by monotone convergence, we get $\E[N_0]= \E[a_0]$. \end{proof} 

In order to prove Propostion \ref{propmgfMn}, we use the following results.
\begin{lem}
\label{populatioauxlem1}
Consider the infinite product $\prod_{i=1}^{\infty}(1+b_i)$, where $b_i\geq 0$ for all $i$. Then if $\sum_{i=1}^{\infty} b_i$ converges (resp. diverges), then $\prod_{i=1}^{\infty}(1+b_i)$ also converges (resp. diverges). 
\end{lem}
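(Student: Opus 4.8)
The plan is to sandwich the partial products between the partial sums and their exponential, and then to exploit monotonicity. First I would record that, because $b_i \ge 0$, both the partial sums $S_n := \sum_{i=1}^n b_i$ and the partial products $P_n := \prod_{i=1}^n (1+b_i)$ are non-decreasing in $n$. Hence each of the two objects either converges to a finite limit or increases to $+\infty$; moreover, since every factor satisfies $1+b_i \ge 1$, we have $P_n \ge 1$, so if the product converges its limit is nonzero. It therefore suffices to show that $\{S_n\}$ is bounded if and only if $\{P_n\}$ is bounded.

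The core of the argument is the elementary two-sided estimate
\begin{align}
\label{twosidedbound}
1 + S_n \le P_n \le e^{S_n}, \qquad n \ge 1.
\end{align}
For the lower bound I would expand $P_n = \prod_{i=1}^n (1+b_i)$ and keep only the constant term $1$ together with the $n$ linear terms $b_i$; every omitted term is a product of non-negative factors and is thus $\ge 0$, so what remains is exactly $1 + \sum_{i=1}^n b_i = 1 + S_n$. For the upper bound I would apply the standard inequality $1 + x \le e^x$, valid for all $x \ge 0$, factor by factor, obtaining $P_n \le \prod_{i=1}^n e^{b_i} = e^{S_n}$.

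With \eqref{twosidedbound} in hand, both directions are immediate. If $\sum_i b_i$ converges, say to $S < \infty$, then $S_n \le S$ for all $n$, so $P_n \le e^{S}$; being non-decreasing and bounded, $\{P_n\}$ converges to a finite nonzero limit, i.e.\ the product converges. Conversely, if $\sum_i b_i$ diverges, then $S_n \to \infty$, and the lower bound $P_n \ge 1 + S_n$ forces $P_n \to \infty$, i.e.\ the product diverges. There is essentially no real obstacle in this lemma; the only point worth a word of care is the convention that convergence of an infinite product means a finite \emph{nonzero} limit, which is automatic here since $P_n \ge 1$.
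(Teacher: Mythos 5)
Your proof is correct and complete. Note that the paper itself offers no argument for this lemma: it simply defers to a textbook citation (``For a proof, see e.g.\ \cite{heinbockel2010introduction}''). What you have written is the standard proof that such references contain --- the two-sided sandwich
\begin{align*}
1+S_n \;\le\; P_n \;\le\; e^{S_n},
\end{align*}
with the lower bound coming from expanding the product and discarding nonnegative higher-order terms, the upper bound from $1+x\le e^x$ applied factor by factor, and monotonicity of both $\{S_n\}$ and $\{P_n\}$ turning boundedness into convergence. Both directions then follow immediately, exactly as you argue. Your closing observation is also the right point of care: convergence of an infinite product conventionally means a finite \emph{nonzero} limit, and here that is automatic because every partial product satisfies $P_n\ge 1$. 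So rather than taking a different route from the paper, you have supplied the self-contained elementary proof that the paper chose to omit; in the context of this paper (where the lemma is used to show $\E[e^{tN_0}]<\infty$ in Proposition \ref{propmgfMn}), only the convergence direction is actually needed, but proving the equivalence costs nothing extra.
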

For a proof, see e.g. \cite{heinbockel2010introduction}.

\begin{proof}[Proof of Proposition \ref{propmgfMn}]

Using (\ref{populationequation1}), we can compute the moment generating function of $N_0$ as follows. For $t\in \R$:
\begin{align}
\label{populationequation2}
\E[e^{t N_0}]&=\E[e^{t}e^{t \sum_{i=1}^{\infty}Z_{0,i}}]=e^{t}\prod_{i=1}^{\infty}\E[e^{t Z_{0,i}}]\nonumber\\
&=e^{t}\prod_{i=1}^{\infty}(e^{t}\PP[a_0>i]+\PP[a_0\leq i]).
\end{align}
Thus, $
\E[e^{t N_0}]=e^{t}\prod_{i=1}^{\infty}(b_i+1),
$
where $b_i=\PP[a_0>i](e^t-1)$. As $\E[a_0]<\infty$, $\sum_{i=1}^{\infty}b_i<\infty$ and, therefore, by Lemma \ref{populatioauxlem1}, $\E[e^{t N_0}]<\infty$. 
\end{proof} 

\section{Dynamics on Unimodular Networks}
\label{Unimodulardynamics}  

We review the necessary concepts on unimodular networks dynamics used in this paper. We borrow the concepts of this section from \cite{BOAunimodular} and \cite{aldous2007processes}, which contain a more complete treatment of the subject. 

\begin{defi}[Locally finite rooted networks]	A network is quadruple \break $(G,\Xi,u_1,u_2)$ where:
\begin{enumerate}
	\item $G=(V,E)$ is a (multi-)graph;
	\item $\Xi$ is a complete separable metric space;
	\item $u_1:V\to \Xi$. The element $u_1(v)$ is called the mark of the vertex $v$;
	\item $u_2:\{(v,e):v\in V,e\in E,v\sim e\}\to \Xi$. The element $u_2(v,e)$ is called the mark of the pair $(v,e)$. 
\end{enumerate}
If $G$ has a distinguished vertex, we call the network rooted.  A network is locally finite if all its vertices have finite degrees. We also consider the case in which $G$ has two distinguished vertices. 
\end{defi}

A isomorphism between graphs $G$ and $G'$ is a bijection that preserves the (direct) vertices. A isomorphism between rooted  networks also preserves its marks and maps the distinguished vertex (or vertices) of $G$ to $G'$.    

Let $\mathcal{G}_*$ (resp. $\mathcal{G}_{**}$) be the space isomorphism classes  of locally finite rooted (resp. with two distinct vertices) networks and $\mathcal{B}(\mathcal{G}_*)$ (resp. $\mathcal{B}(\mathcal{G}_{**}))$ its Borel-$\sigma$ algebra. An element of $\mathcal{G}_*$ (resp. $\mathcal{G}_{**})$ is denoted by $[G,o]$ (resp. $[G,o,o']$). We denote the set of vertices of any representant of the isomorphism class $[G,o]$ (or $[G,o,o']$) by $V$. 

A random rooted (resp. with two distinct vertices) network is measurable mapping from a general probability space $(\Omega,\mathcal{F}, \PP)$ to $(\mathcal{G}_*,\mathcal{B}(\mathcal{G}_*))$ (resp. $(\mathcal{G}_{**},\mathcal{B}(\mathcal{G}_*))$). We denote a random network by $[\textbf{G},\textbf{o}]$ (resp. $[\textbf{G},\textbf{o},\textbf{o'}]$). 

A locally finite network is unimodular if for all measurable functions $g:\mathcal{G}_{**}\to \mathbb{R}^{\geq}:$
\begin{align*}
\e\left[\sum_{v\in V}g[\textbf{G},\textbf{o},v]\right]=\e\left[\sum_{v\in V}g[\textbf{G},v,\textbf{o}]\right].	
\end{align*}
This definition does not depend  of the choice of
representative of $[\textbf{G},\textbf{o},\textbf{v}]$.

\begin{defi}
\label{covariantvertexshift}
A \textbf{covariant vertex-shift} is a map $X_G$ which associates to each unimodular network a function $x_G:V\to V$ such that
\begin{enumerate}
\item $x_G$ is covariant under isomorphisms and
\item the function $[G,o,o']\to \ind\{x_G(o)=o'\}$ is measurable on $\calG_{**}$. 	
\end{enumerate}
\end{defi}

From vertex-shift $X_G$ acting on $[G,o,o']$ we let $G^{X}=(V^X,E^X)$ be the graph such that $V^X=V$ and $E^X=\{v,x_G(v)\}_{v\in V}$. We then define the following subsets of $V^X$. 

\begin{defi}[Connected Component]
	The \emph{connected component} of $v$ under the action of vertex-shift $x_G$ is given by 
	\begin{align}
	\label{connectedcomponent2}
	C(v):=\{w\in V~\hbox{s.t.}~\exists~i,j\in \bbN~\hbox{with}~x_{G}^i(v)=x^j_G(w)\}.
	\end{align}
\end{defi}

\begin{defi}[Foil]
\label{foil} 
The \emph{foil} of $v$ is defined as
	\begin{align}
	L(v)=\{w\in V~\hbox{s.t.}~\exists~j\in \bbN~\hbox{with}~x_G^j(w)=x^j_G(v)\}.	
	\end{align}
\end{defi}

We let $c(v)$ (resp. $l(v)$) denote the cardinality of $C(v)$ (resp. $L(v)$). In~\cite{BOAunimodular}, it is showed each connected component of $G^X$, in particular $C(o)$, falls within one of the following categories. 

\begin{enumerate}
\item Class \textbf{F}/\textbf{F}: $c(o)<\infty$ and for all $v\in C(o)$, $l(v)<\infty$. 
\item Class \textbf{I}/\textbf{F}: $c(o)=\infty$ and for all $v\in C(o)$, $l(v)<\infty$. 
\item Class \textbf{I}/\textbf{I}: $c(o)=\infty$ and for all $v\in C(o)$, $l(v)=\infty$.
\end{enumerate}

\section*{Acknowledgments}
This work was supported by a grant of the Simons Foundations (\#197982 to
the University of Texas at Austin). The authors would like to thank James Murphy III for valuable comments.
\bibliographystyle{amsplain}
\bibliography{nearextinction} 
\end{document}